\newtheorem{theorem}{Theorem}[section]
\newtheorem{thm}[theorem]{Theorem}
\newtheorem{lemma}[theorem]{Lemma}
\newtheorem{thmalph}{Theorem}
\newtheorem{lem}[theorem]{Lemma}
\newtheorem{proposition}[theorem]{Proposition}
\newtheorem{corollary}[theorem]{Corollary}
\theoremstyle{definition}
\theoremstyle{remark}
\DeclareMathOperator{\Real}{Re}
\DeclareMathOperator{\Aut}{Aut}
\DeclareMathOperator{\diag}{diag}
\DeclareMathOperator{\Irr}{Irr}
\DeclareMathOperator{\GL}{GL}
\DeclareMathOperator{\SL}{SL}
\DeclareMathOperator{\SU}{SU}
\newcommand{\bbF}{{\mathbb F}}
\newcommand{\bbR}{{\mathbb R}}
\newcommand{\out}{\mathrm{Out}}
\newcommand{\R}{{\mathbb R}}
\newcommand{\Ker}{{\mathrm {Ker}}}
\newcommand{\PSL}{{\mathrm {PSL}}}
\newcommand{\PSU}{{\mathrm {PSU}}}
\newcommand{\PSp}{{\mathrm {PSp}}}
\DeclareMathOperator{\Sol}{Sol}
\newcommand{\RR}{\mathbb{R}}
\newcommand{\Fit}{\mathbf{F}}
\newcommand{\OO}{\mathbf{O}}
\newcommand{\Centralizer}{\mathbf{C}}
\numberwithin{equation}{section}
\newcommand{\Out}{{\mathrm {Out}}}
\newcommand{\Elt}{\mathcal{E}}
\newcommand{\Layer}{\mathbf{E}}
\newcommand{\bg}[1]{\textbf{#1}}
\newcommand{\wt}[1]{\widetilde{#1}}
\newcommand{\bC}{{\mathbf{C}}}
\newcommand{\bF}{{\mathbf{F}}}
\newcommand{\bN}{{\mathbf{N}}}
\newcommand{\Al}{\textup{\textsf{A}}}
\newcommand{\Sy}{\textup{\textsf{S}}}
\newcommand{\irr}{\mathrm{Irr}}
\begin{document}

\title[Real-valued characters]{ON THE NUMBER OF IRREDUCIBLE REAL-VALUED
CHARACTERS OF A FINITE GROUP}

\author[N.\,N. Hung]{Nguyen Ngoc Hung}
\address{Department of Mathematics, The University of Akron, Akron, OH 44325,
USA} \email{hungnguyen@uakron.edu}

\author[A.\,A. Schaeffer Fry]{A. A. Schaeffer Fry}
\address{Department of Mathematical and Computer Sciences, Metropolitan State University of Denver,
Denver, CO 80217, USA} \email{aschaef6@msudenver.edu}

\author[H.\,P. Tong-Viet]{Hung P. Tong-Viet}
\address{Department of Mathematical Sciences, Binghamton University, Binghamton, NY 13902-6000, USA}
\email{tongviet@math.binghamton.edu}

\author[C.\,R. Vinroot]{C. Ryan Vinroot}
\address{Department of Mathematics, College of William and Mary,
Williamsburg, VA, 23187, USA} \email{vinroot@math.wm.edu}

\begin{abstract} We prove that there exists an integer-valued function $f$
on positive integers such that if a finite group $G$ has at most $k$
real-valued irreducible characters, then $|G/\Sol(G)|\leq f(k)$,
where $\Sol(G)$ denotes the largest solvable normal subgroup of $G$.  In the case
 $k=5$, we further classify $G/\Sol(G)$.
This partly answers a question of Iwasaki \cite{Iwa} on the relationship
between the structure of a finite group and its number of
real-valued irreducible characters.
\end{abstract}

\thanks{The paper was initiated while the authors were in residence at MSRI (Berkeley, CA) during the Spring 2018
semester (supported by the NSF under grant DMS-1440140). We thank
the Institute for the hospitality and support. The second author was supported in part by NSF grant DMS-1801156.  The fourth author was supported in part by a grant from the Simons Foundation, Award \#280496}

\subjclass[2010]{Primary 20C15, 20E45; Secondary 20D05}

\date{May 23, 2019}

\keywords{Finite groups, real-valued characters, real elements.}
\maketitle

\section{Introduction}

Analyzing fields of character values is a difficult problem in the
representation theory of finite groups.  Real-valued characters and rational-valued characters have received
more attention than others.

It is well-known that a finite group $G$ has a unique
real/rational-valued irreducible character if and only if $G$ has odd
order. In \cite{Iwa}, Iwasaki proposed to study the relationship between
the structure of $G$ and the number of real-valued
irreducible characters of $G$, which we denote $k_\R(G)$. He showed that if $k_\R(G) = 2$, then
$G$ has a normal Sylow $2$-subgroup which is either homocyclic or a so-called
Suzuki $2$-group of type $A$. Going further, Moret\'{o} and Navarro
proved in \cite{MN} that if $G$ has at most three irreducible
real-valued characters, then $G$ has a cyclic Sylow $2$-subgroup or
a normal Sylow $2$-subgroup which is homocyclic, quaternion of
order $8$, or an iterated central extension of a Suzuki $2$-group
whose center is an elementary abelian $2$-group. In particular, the
groups with at most three irreducible real-valued characters must be
solvable. Indeed, it was even proved in \cite{NST} that a finite
group with at most three degrees of irreducible real-valued
characters must be solvable.

In a more recent paper \cite{Tongviet}, the third author studied
groups with four real-valued irreducible characters. Among other
results, he proved that a nonsolvable group with exactly four
real-valued irreducible characters must be the direct product of
$\SL_3(2)$ and an odd-order group. Classifying finite groups with exactly five real-valued irreducible
characters seems to be a difficult problem. In the next result, which we prove in Section \ref{Five}, we
control the nonsolvable part of those groups.  We write $\Sol(G)$
to denote the solvable radical of $G$, i.e. the largest solvable
normal subgroup of $G$.

\begin{thmalph}\label{fivervs}
Suppose that a finite group $G$ has at most five real-valued
irreducible characters. Then $G/\Sol(G)$ is isomorphic to the
trivial group, $\SL_3(2)$, $\Al_5$, $\PSL_2(8)\cdot 3$ or
${}^2{\rm{B}}_2(8)\cdot 3$.
\end{thmalph}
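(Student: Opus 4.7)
The plan is to reduce to classifying groups $H$ with $\Sol(H)=1$ and $k_\R(H)\le 5$, and then to apply the Classification of Finite Simple Groups.

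\emph{Reduction to trivial solvable radical.} Since real-valued irreducible characters of $G/N$ inflate to real-valued irreducible characters of $G$, we have $k_\R(G/N)\le k_\R(G)$ for every $N\triangleleft G$. Taking $N=\Sol(G)$, we may replace $G$ by $H:=G/\Sol(G)$, which has trivial solvable radical and satisfies $k_\R(H)\le 5$. The socle of $H$ is then a direct product $T_1\times\cdots\times T_r$ of non-abelian simple groups, and $H$ embeds into $\Aut(T_1\times\cdots\times T_r)$.

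\emph{Step 1: reduction to the almost simple case.} I would first show $r=1$. If $r\ge 2$, then since every non-abelian simple group $T$ satisfies $k_\R(T)\ge 4$ (the contrapositive of the Moret\'o--Navarro theorem that $k_\R\le 3$ forces solvability), the socle $T_1\times\cdots\times T_r$ has at least $4^r\ge 16$ real conjugacy classes. The quotient $H/(T_1\times\cdots\times T_r)$ embeds into $\out(T_1)\wr\Sy_r$ (after grouping isomorphic factors), and a direct orbit-counting argument shows that its action cannot fuse these $\ge 16$ real socle-classes into fewer than six $H$-classes, contradicting $k_\R(H)\le 5$. Hence $H$ is almost simple with socle $S$.

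\emph{Step 2: almost simple analysis.} Using the Classification, I would check each family for $S$. For $S=\Al_n$ with $n\ge 6$, a direct count using self-conjugate partitions and the splitting behavior of classes in $\Al_n$ versus $\Sy_n$ gives $k_\R\ge 6$ for every almost simple extension; thus only $\Al_5$ survives (with $k_\R(\Al_5)=5$, while $k_\R(\Sy_5)=7$). For sporadic $S$, direct inspection of character tables yields $k_\R(H)\ge 6$. For $S$ of Lie type, lower bounds on the number of real semisimple classes in terms of rank and $q$, combined with the limited fusing power of outer automorphisms, reduce the problem to a finite list of small candidates; a careful case-by-case analysis of the rank-one and rank-two families $\PSL_2(q)$, $\PSL_3(q)$, $\PSU_3(q)$, $\Sz(q)$, and ${}^2\gtwo(q)$ together with their outer extensions picks out $\PSL_2(7)\cong\SL_3(2)$, $\PSL_2(8)\cdot 3$, and $\Sz(8)\cdot 3$ as the only additional surviving groups. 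For instance, in $\PSL_2(8)\cdot 3$ the order-$3$ field automorphism $\sigma$ fuses the three real $\PSL_2(8)$-classes of order $7$ into a single class (and likewise for the three of order $9$), giving $5$ real classes inside $\PSL_2(8)$; the $6$ outer classes are all non-real, since the inverse of $x\sigma$ lies in the coset $\PSL_2(8)\sigma^2\ne\PSL_2(8)\sigma$.

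\emph{Main obstacle.} The crux is the low-rank Lie type analysis, where identifying the exact level in the automorphism tower (e.g.\ $\PSL_2(8)\cdot 3$ rather than $\PSL_2(8)$ alone, and $\Sz(8)\cdot 3$ rather than $\Sz(8)$) requires careful bookkeeping of how field, diagonal, and graph automorphisms permute and fuse real classes, and a precise determination of realness in the outer cosets via centralizer information.
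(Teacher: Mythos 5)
Your overall strategy coincides with the paper's: pass to $H=G/\Sol(G)$, show the socle has a single simple factor so $H$ is almost simple, then run a CFSG case analysis on almost simple groups with at most five real-valued irreducible characters. However, at both crucial steps your argument is asserted rather than proved, and in each case the missing ingredient is exactly where the real work lies. In Step 1, the claim that $\ge 4^r\ge 16$ real socle classes ``cannot fuse into fewer than six $H$-classes'' does not follow from the count alone: the fusing group $H/(T_1\times\cdots\times T_r)\le\Out(T_1)\wr\Sy_r$ can be far larger than $4^r$, so a bare orbit-counting bound gives nothing. One needs an automorphism-invariant that caps fusion. The paper's Lemma \ref{lem:reduction} uses real element \emph{orders}: each simple factor has at least four distinct orders of real elements (Tong-Viet, plus NST for the odd primes), and products of coprime-order real elements in $S_1\times S_2$ are real, so $S_1\times S_2$ already has at least six distinct real element orders --- a quantity no amount of fusion can reduce, since $|\Elt(G/\Sol(G))|\le k_\R(G/\Sol(G))\le 5$. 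Your step is true and repairable along these lines (e.g.\ by counting orbits on order-types), but as written it is a gap.

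In Step 2, ``lower bounds on real semisimple classes \ldots{} reduce the problem to a finite list of small candidates; a careful case-by-case analysis \ldots{} picks out'' the surviving groups is a program, not a proof: the finite list is never produced, and the genuinely delicate cases are not visible in your sketch. The paper (Lemmas \ref{lem:simple 5 element orders} and \ref{lem:5 real characters-almost simple}) must, for instance, confront the infinite family $\PSL_2(3^f)$ with $f\ge 7$ prime, $3^f+1=4r$ and $3^f-1=2s$ with $r,s$ odd primes: these groups pass every test based on real element orders ($\Elt(S)=\{1,2,r,2r,s\}$) and are only eliminated by counting real classes of order $s$ --- there are $(s-1)/2$ of them in $S$, and since $|\Out(S)|=2f$ at least $(s-1)/(4f)=(3^f-3)/(8f)>1$ survive fusion in any almost simple extension, forcing more than five real classes. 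Arithmetic borderline families of this kind (similarly $\SL_2(2^f)$ with $2^f\pm1$ nearly prime, and ${}^2\mathrm{B}_2(2^{2f+1})$ handled via the factorization of $4^{2f+1}+1$) are exactly what the case analysis must rule out, and your sketch gives no mechanism for them. Your concrete verifications for the surviving groups ($\Al_5$, $\SL_3(2)$, $\PSL_2(8)\cdot 3$, ${}^2\mathrm{B}_2(8)\cdot 3$, including the observation that no element outside the socle is real in an extension of odd degree $3$) are correct and agree with the paper's Lemma \ref{lem:5 real characters-almost simple}, but they constitute the easy half of that lemma.
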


Theorem \ref{fivervs} and the aforementioned results
suggest that the nonsolvable part of a finite group perhaps is
bounded in terms of the number of real-valued irreducible characters
of the group. We obtain the following result, proved in Section \ref{Bound}, which
provides a partial answer to Iwasaki's problem.

\begin{thmalph}\label{theorem1} There exists an integer-valued function $f$ on positive integers
such that if G is a finite group with at most $k$ real-valued
irreducible characters, then $|G/\Sol(G)| \leq f(k)$.
\end{thmalph}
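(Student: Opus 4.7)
The plan is to reduce first to groups with trivial solvable radical and then to exploit the rapid growth of real conjugacy classes in large nonabelian finite simple groups. Since the inflation map $\Irr(G/\Sol(G))\hookrightarrow\Irr(G)$ preserves real-valuedness, one has $k_\R(G)\ge k_\R(G/\Sol(G))$, so it suffices to produce a function $f$ such that every finite group $H$ with $\Sol(H)=1$ and $k_\R(H)\le k$ satisfies $|H|\le f(k)$.

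For such a semisimple $H$, the socle decomposes as $N=T_1^{n_1}\times\cdots\times T_r^{n_r}$ where the $T_i$ are pairwise non-isomorphic nonabelian finite simple groups, and $H/N$ embeds into $\Out(N)=\prod_{i=1}^r(\Out(T_i)\wr \Sy_{n_i})$. The key inequality is
\[
k_\R(H)\;\ge\;\#\{\text{real $H$-classes contained in }N\}\;\ge\;\#\bigl((H/N)\text{-orbits on real classes of }N\bigr),
\]
since an $(H/N)$-orbit consisting entirely of real $N$-classes is automatically inversion-stable as a subset of $N$ and hence fuses into a single real $H$-class. Enlarging the acting group from $H/N$ to $\Out(N)$ can only reduce the orbit count; combining this with the fact that real classes of $T_i^{n_i}$ are $n_i$-tuples of real classes of $T_i$, together with the standard identification of $A\wr\Sy_n$-orbits on $X^n$ with size-$n$ multisets from $X/A$, yields
\[
k_\R(H)\;\ge\;\prod_{i=1}^r\binom{r_i+n_i-1}{n_i},
\qquad r_i:=\bigl|(\text{real classes of }T_i)/\Out(T_i)\bigr|.
\]

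Every nonabelian simple group has even order by Feit--Thompson, so each $T_i$ has an involution class that is real and distinct from $\{1\}$, giving $r_i\ge 2$. Each factor in the product above is therefore at least $n_i+1$, and the hypothesis $k_\R(H)\le k$ forces $n_i\le k-1$, the number of distinct simple types $r\le\log_2 k$, and (using $\binom{r_i+n_i-1}{n_i}\ge r_i$ when $n_i\ge 1$) each $r_i\le k$; in particular $k_\R(T_i)\le k\cdot|\Out(T_i)|$. At this point I would invoke the classification-dependent fact that $k_\R(T)/|\Out(T)|\to\infty$ as $|T|\to\infty$ over nonabelian finite simple groups $T$; this bounds $|T_i|\le g(k)$ for a suitable $g$, and therefore both $|N|=\prod|T_i|^{n_i}$ and $[H:N]\le|\Out(N)|$ are bounded in terms of $k$.

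The principal technical obstacle is the last growth statement, which must be established uniformly across the classification. For alternating groups it is elementary since $|\Out(\Al_n)|\le 4$ while the number of real classes tends to infinity with $n$, and the sporadic groups form a finite list. For groups of Lie type, $|\Out(T)|$ grows only polylogarithmically in $|T|$, whereas the number of real conjugacy classes grows polynomially in the field size to a power comparable to the rank; this can be extracted from the descriptions of real-valued characters of quasisimple groups underlying results in the earlier sections. Assembling these estimates produces the desired function $f$.
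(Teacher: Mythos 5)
Your argument is correct, and it reaches the conclusion by a genuinely different route than the paper's Section \ref{Bound}. The first reduction (passing to $\Sol(G)=1$, socle $N=T_1^{n_1}\times\cdots\times T_r^{n_r}$, $\bC_G(N)=1$) is the same, but after that you work entirely with real conjugacy classes: by Brauer's permutation lemma, $k_\R(H)$ is at least the number of $H$-orbits on real $N$-classes, which you bound below by the number of $\Out(N)$-orbits, giving $\prod_i\binom{r_i+n_i-1}{n_i}\le k$. This single combinatorial inequality delivers both the multiplicity bound $n_i\le k-1$ (needing only a real involution class in each $T_i$, via Feit--Thompson) and the bound $k_\R(T_i)\le k\,|\Out(T_i)|$. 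The paper instead argues on the character side: Proposition \ref{BoundProp} bounds the multiplicity by producing $n$ distinct \emph{rational}-valued irreducible characters of $G$, using Lemma \ref{ExtendLemma} (a nontrivial character of each simple group extending to a rational-valued character of its automorphism group, which rests on the Steinberg character and the Atlas) together with tensor induction and the Clifford correspondence; and it bounds $|S|$ by applying Theorem \ref{thm:mainsimple} to the almost simple group $\bN_G(S_1)/\bC_G(S_1)$, inducing $k^2+1$ real-valued characters to $G$ and losing a factor of at most $k$ in the process. Your route is more elementary and self-contained on the reduction side (no extendibility, tensor induction, or induced-character bookkeeping) and yields an explicit lower bound for $k_\R(H)$; the paper's route buys the extra information that already the rational-valued characters control the multiplicities, and it consumes Theorem \ref{thm:mainsimple} only in its stated almost-simple form. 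One caveat: the classification-dependent fact you defer, $k_\R(T)/|\Out(T)|\to\infty$, does not follow formally from the \emph{statement} of Theorem \ref{thm:mainsimple} (real-valued characters of an almost simple group above $S$ need not lie over real-valued characters of $S$), but it is exactly what Section \ref{Infinity} proves, namely that $\mathfrak{K}(S)=k_\R(S)/|\out(S)|-|\out(S)|\to\infty$ as $|S|\to\infty$; citing that quantity directly closes the one gap in your write-up, so your proof is complete given the paper's Section \ref{Infinity}.
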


Our arguments would allow us to find explicit bounds in Theorem
\ref{theorem1}, but these bounds perhaps are far from best possible.
Therefore, for the sake of simplicity, we have not tried to find the
best bounding function.

Our proof of Theorem \ref{theorem1} uses the classification of finite simple
groups and the following statement for simple groups, proved in Section \ref{Infinity}, which may be of independent interest.

\begin{thmalph}\label{thm:mainsimple}
For a finite nonabelian simple group $S$ and $S \trianglelefteq G
\leq \Aut(S)$, let $k_\bbR(G|S)$ denote the number of real-valued
irreducible characters of $G$ whose kernels do not contain $S$. Then $k_\bbR(G|S) \rightarrow \infty$ as $|S| \rightarrow \infty$.
\end{thmalph}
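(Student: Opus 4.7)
The plan is to apply the classification of finite simple groups and, for each family, exploit the following elementary inequality. For any finite group $G$ with normal subgroup $N$,
\[
k_\bbR(G|N) \;=\; k_\bbR(G) - k_\bbR(G/N) \;\geq\; \frac{r(N)}{|G/N|} \,-\, |G/N|,
\]
where $r(\cdot)$ denotes the number of real conjugacy classes. The first equality is immediate, since characters containing $N$ in their kernel are inflations from $G/N$ and real-valuedness is preserved by inflation. For the inequality, $G/N$ permutes the ($G/N$-invariant) set of real $N$-classes with orbits of size at most $|G/N|$, and each such orbit yields a distinct real $G$-class inside $N$; together with the trivial bound $k_\bbR(G/N) = r(G/N) \leq |G/N|$, this gives the displayed bound. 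Applying it with $N = S$ and using $|G/S| \leq |\Out(S)|$, it suffices to prove
\[
\frac{r(S)}{|\Out(S)|^2} \;\longrightarrow\; \infty \quad \text{as } |S|\to\infty.
\]

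For sporadic $S$ the statement is vacuous, since there are only finitely many. For $S = A_n$ with $n \geq 5$, one has $|\Out(S)| \leq 4$, so it suffices to observe the elementary fact $r(A_n) \to \infty$; this follows, for example, from the fact that every irreducible character of $S_n$ is rational, so each $S_n$-class contained in $A_n$ that is not split on restriction contributes a real $A_n$-class, and the number of such classes grows at least like $p(n)/2$.

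For the main, Lie-type case, write $S$ as a simple group of Lie type of untwisted rank $r$ over $\bbF_q$ with $q = p^f$. Then $|\Out(S)| \leq d\cdot f\cdot g$, where $d \leq r+1$ bounds the diagonal-automorphism factor and $g\in\{1,2,3,6\}$ is the graph factor, so $|\Out(S)|^2 = O((r\log q)^2)$ is polylogarithmic in $|S|$. We then invoke known lower bounds on $r(S)$ family by family. For the symplectic and orthogonal families, every element is a product of two involutions, so $r(S)$ equals $k(S)$ up to a bounded factor arising from the center, and $k(S)$ already dwarfs any fixed polynomial in $r\log q$. For $\PSL_n(q)$ and $\PSU_n(q)$, one produces real semisimple classes from maximal tori on which $-1$ is realized by a Weyl-group element (e.g.\ the longest element), giving a bound of the form $r(S)\gtrsim q^{\lfloor n/2\rfloor}$ up to a constant depending on the center. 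For the finitely many exceptional types, analogous enumerations of real semisimple and real unipotent classes provide more than enough.

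The principal obstacle lies in the Lie-type step: although the reduction via the displayed inequality is uniform, producing explicit lower bounds on $r(S)$—in particular for the linear and unitary families, where not every element is real—requires invoking case-by-case counts of real semisimple elements in finite reductive groups (e.g.\ from the work of Gow and of Tiep--Zalesski). A finite list of small-rank or small-$q$ exceptions may need to be set aside, but since we only require a limit statement, such exceptions do not affect the conclusion.
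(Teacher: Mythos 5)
Your opening reduction is exactly the paper's Lemma \ref{lem:roughbound}: $k_\bbR(G|S)=k_\bbR(G)-k_\bbR(G/S)\geq k_\bbR(S)/|\Out(S)|-|\Out(S)|$, so everything comes down to showing that $k_\bbR(S)$ grows faster than $|\Out(S)|^2$. That part, and the alternating-group case, are correct and match the paper. The problem is that the Lie-type step --- which you yourself identify as ``the principal obstacle'' --- is where the entire content of the theorem lies, and the specific facts you propose to invoke there are not all true. First, it is false that every element of the simple symplectic and orthogonal groups is a product of two involutions (or even real): by Tiep--Zalesski this holds for ${\rm C}_n(q)$, ${\rm B}_n(q)$ and ${\rm D}_{2n}(q)$ only when $q\not\equiv 3\pmod 4$, and fails for $\PSp_{2n}(q)$ with $q\equiv 3\pmod 4$ and for ${\rm D}_n(q)$ with $n$ odd. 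This is precisely why the paper must construct explicit families of real semisimple elements $s_m(\delta)=\prod_k h_{\alpha_{2k+1}}(\delta)$ and separate their classes by controlling fusion in the maximal torus (Proposition \ref{prop:classicals}). Second, for the linear and unitary groups your mechanism is wrong: $-1$ does not lie in the Weyl group of type ${\rm A}_{n-1}$ for $n\geq 3$ (the longest element realizes $-w_0$, i.e.\ inversion composed with the graph automorphism), so there is no maximal torus inverted by a Weyl element. The correct device, used in Proposition \ref{prop:SLn}, is to take elements whose eigenvalue multiset is inversion-stable, $\diag(\lambda_1,\lambda_1^{-1},\dots,\lambda_{\bar n},\lambda_{\bar n}^{-1},I_{n-2\bar n})$, and then carefully rule out conjugacy to central translates $sz$ in order to get distinct irreducible real characters of $\PSL_n^\pm(q)$.

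The most serious gap is the fixed-$q$, growing-rank regime. Your claimed bound $r(S)\gtrsim q^{\lfloor n/2\rfloor}$ counts tuples rather than conjugacy classes (classes correspond to inversion-stable eigenvalue multisets), and for small $q$ the construction degenerates completely: for $\PSL_n(2)$ and $\PSU_n(2)$ the relevant torus contributes no nontrivial elements at all, so no semisimple-element count of this kind can prove $k_\bbR(\PSL_n(2))\to\infty$. The paper needs a genuinely different idea here (Lemma \ref{lem:fixq}): the unipotent characters are real-valued by Lusztig's rationality results and are indexed by partitions of $n+1$ (type ${\rm A}$) or by symbols (types ${\rm B}$, ${\rm C}$, ${\rm D}$), so their number grows like the partition function while $|\Out(S)|$ stays bounded for fixed $q$. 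Without this (or an equivalent substitute), the limit $|S|\to\infty$ along families with bounded $q$ is simply not established, so the proposal as written does not constitute a proof.
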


There is no rational-valued analogue of Theorem \ref{theorem1}, as
shown by the simple groups $\PSL_2(3^{2k+1})$ with $k\geq 1$. We
also note that, by Brauer's permutation lemma, the number of
real-valued irreducible characters and that of conjugacy classes of
real elements in a finite group are always the same.

\section{Nonsolvable groups with five real-valued irreducible characters} \label{Five}

For a finite group $G$, we denote by $\Real(G)$ the set of all real
elements of $G$, $\Elt(G)$ the set of orders of real elements of $G$
and $\Irr_\R(G)$ the set of real-valued irreducible characters of
$G$. Recall that  the generalized Fitting subgroup $\Fit^*(G)$ of
$G$  is the central product of  the layer $\Layer(G)$ of $G$ (the
subgroup of $G$ generated by all quasisimple subnormal subgroups of
$G$) and the Fitting subgroup $\Fit(G)$. Note that if $G$ has a
trivial solvable radical, that is, it has no nontrivial normal
solvable subgroups, then $\Fit^*(G)=\Layer(G)$ is a direct product
of nonabelian simple groups. Moreover, if $\Fit^*(G)$ is a
nonabelian simple group, then $G$ is an almost simple group with
socle $\Fit^*(G)$.

\begin{lem}\label{lem:reduction}
Let $G$ be a finite nonsolvable group with a trivial solvable
radical. If $|\Elt(G)|\leq 5$, then $G$ is an almost simple group.
\end{lem}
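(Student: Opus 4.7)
The plan is to show, by contradiction, that the generalized Fitting subgroup $L := \Fit^*(G)$ is a single nonabelian simple group. Once this is established, the standard containment $\Centralizer_G(\Fit^*(G)) \leq \Fit^*(G)$ combined with $\Center(L) = 1$ (which holds since $\Sol(G) = 1$ forces $\Fit(G) = 1$) yields $G \hookrightarrow \Aut(L)$, making $G$ almost simple with socle $L$. Because $\Fit(G) = 1$, we have $L = \Layer(G) = S_1 \times \cdots \times S_n$ with each $S_i$ nonabelian simple; nonsolvability gives $n \geq 1$, and the task is to rule out $n \geq 2$.

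Assume for contradiction that $n \geq 2$. Since $L \trianglelefteq G$, any element of $L$ inverted within $L$ is real in $G$, so $\Elt(L) \subseteq \Elt(G)$ and $|\Elt(L)| \leq 5$. Conjugation in a direct product is componentwise, so an element $(x_1, \ldots, x_n) \in L$ is real in $L$ precisely when each $x_i$ is real in $S_i$, and its order equals the $\lcm$ of the orders of the $x_i$. Therefore
$$\Elt(L) = \{\lcm(a_1, \ldots, a_n) : a_i \in \Elt(S_i)\},$$
and it suffices to show $|\Elt(S_1 \times S_2)| \geq 6$ for any two nonabelian simple groups $S_1$ and $S_2$.

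To that end I would first show $|\Elt(S)| \geq 4$ for every nonabelian simple $S$. The orders $1$ and $2$ belong to $\Elt(S)$ automatically (involutions are real), and Burnside's $p^a q^b$ theorem combined with a CFSG-based family-by-family check (maximal tori inverted by Weyl-group involutions in the Lie-type case, explicit cycle structures in the alternating case, and Atlas inspection for the sporadics) yields real elements of at least two further orders; the minimum $|\Elt(S)| = 4$ is attained precisely by $\SL_3(2)$ and $\Al_5$. With this in hand, choose $\Elt(S_i) \supseteq \{1, 2, a_i, b_i\}$ with $a_i, b_i > 2$, and verify by a brief casework that the induced $\lcm$-closure contains at least six elements; the extremal case is $S_1 \cong S_2 \cong \SL_3(2)$, where $\Elt(S_1 \times S_2) = \{1, 2, 3, 4, 6, 12\}$ has exactly six elements, and every other combination produces strictly more. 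This contradicts $|\Elt(L)| \leq 5$, forcing $n = 1$ as required.

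The main obstacle will be the uniform verification of $|\Elt(S)| \geq 4$ across the CFSG families, though this is largely routine once one recognizes that only the two extremal simple groups $\SL_3(2)$ and $\Al_5$ require careful treatment. The accompanying casework on two-factor $\lcm$-sets is then brief, since sets $\Elt(S_i)$ of size four have a very limited shape.
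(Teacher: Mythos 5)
Your reduction to the generalized Fitting subgroup and to the two-factor product $S_1\times S_2$ is exactly the paper's route, and the observation that $\Elt(S_1\times S_2)$ consists of the numbers $\lcm(a_1,a_2)$ with $a_i\in\Elt(S_i)$ is fine. The gap is in the combinatorial step you call "brief casework": knowing only that $\Elt(S_i)\supseteq\{1,2,a_i,b_i\}$ with $a_i,b_i>2$ does \emph{not} force six real element orders in the product. The sets $\{1,2,3,6\}$ and $\{1,2,4,8\}$ have precisely this shape, are closed under divisors, and are closed under $\lcm$, so if both factors had such a set of real element orders the product would contribute nothing new and $|\Elt(S_1\times S_2)|$ would stay at four. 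Thus the heart of the lemma is not the $\lcm$ bookkeeping but a group-theoretic statement ruling out such configurations: one must know, for instance, that a simple group with no real element of order $4$ has real elements of \emph{two distinct odd prime} orders, and that every nonabelian simple group has a real element of odd prime order. This is exactly what the paper imports (the classification in \cite[Prop.~3.2]{Tongviet}, \cite[Thm.~3.1]{NST}, and \cite[Prop.~6.4]{DNT}) and why its proof splits into the two cases "some $S_i$ has no real element of order $4$" versus "both do," producing the six orders $1,2,p_1,p_2,2p_1,2p_2$ or $1,2,4,p,2p,4p$ respectively.

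Your parenthetical claim that the minimum $|\Elt(S)|=4$ is attained precisely by $\SL_3(2)$ and $\Al_5$ would indeed exclude the shape $\{1,2,3,6\}$, but it is itself an unproven, CFSG-strength classification at least as deep as the lemma (it is essentially Tong-Viet's theorem that the paper cites), and your proposal treats it as a by-product rather than as the load-bearing input. Even granting it, you would still need to exclude the $\lcm$-closed five-element possibility $\{1,2,4,8,16\}$ when the two factors have equal order sets of size five, which again needs the existence of real elements of odd order in simple groups; your appeal to Burnside's $p^aq^b$ theorem gives odd primes dividing $|S|$ but not real elements of those orders. So the proposal's skeleton matches the paper, but as written the key step fails, and closing it requires precisely the refined existence results on real elements of odd prime order and of order $4$ around which the paper's case analysis is built.
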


\begin{proof}
As the solvable radical of $G$ is trivial, we see that
$\Fit^*(G)=\Layer(G)=\prod_{i=1}^rS_i$ is a direct product of
nonabelian simple groups $S_i$  $(1\leq i\leq r),$ for some integer
$r\ge 1$. It suffices to show that $r=1$.

Suppose by contradiction that $r\ge 2.$ Let $M=S_1\times S_2.$ Since
$\Elt(M)\subseteq \Elt(\Fit^*(G))\subseteq \Elt(G)$, we deduce that
$|\Elt(M)|\leq 5.$ Observe that if $x_i\in \Real(S_i)$ for $i=1,2$,
then $x_1x_2\in\Real(M)$ and thus if $x_1$ and $x_2$ have coprime
orders, then $$o(x_1x_2)=o(x_1)o(x_2)\in\Elt(M).$$

We consider the following cases.

(i): $S_1$ or $S_2$ has no real element of order $4$. Without loss,
assume that $S_1$ has no real element of order $4$. Then by
\cite[Proposition 3.2]{Tongviet},  $S_1$ is isomorphic to one of the
following groups:

$$ \begin{array}{c} \SL_2(2^f) (f\ge 3), \PSU_3(2^f) (f\ge 2), {}^2{\rm B}_2(2^{2f+1}) (f\ge 1); \\
 \PSL_2(q) (5\leq q\equiv 3,5 \:{(\text{mod $8$})}),
{\rm J}_1, {}^2{\rm G}_2(3^{2f+1}) (f\ge 1).
\end{array}$$

By \cite[Theorem 3.1]{NST}, $S_1$ contains real elements $z_1$ and
$z_2$ of order $p_1$ and $p_2$, where $p_1\neq p_2$ are odd primes.
Since $S_2$ has a real element of order $2$, we see that $M$ has
real elements of order $1,2,p_1,p_2,2p_1,2p_2$, which is impossible.

(ii): Both $S_1$ and $S_2$ have real elements of order $4$. By
\cite[Proposition 6.4]{DNT}, $S_1$ contains a real element of order
$p$, where $p$ is an odd prime. Since $S_2$ has real elements of
order $2$ and $4$, $M$ has real elements of order $1,2,4,p,2p$ and
$4p$, which is impossible again.
\end{proof}

Next, we classify all finite nonabelian simple groups $S$ with
$|\Elt(S)|\leq 5.$ Recall that a finite group $G$ is called a
$(C)$-group if the centralizer of every involution of $G$ has a
normal Sylow $2$-subgroup. By \cite[Lemma 2.7]{DGN}, $G$ is a
$(C)$-group if and only if $G$ has no real element of order $2m$
with $m>1$ being odd.

\begin{lem}\label{lem:simple 5 element orders}
Let $S$ be a nonabelian simple group. Then $|\Elt(S)|\leq 5$ if and
only if $S$ is isomorphic to one of the following simple groups:

\begin{enumerate}[$(1)$]
\item $\Al_5\cong\PSL_2(4)$, $\SL_3(2)$, $\PSL_3(3)$, or $\PSU_3(3)$;

\item $\PSL_2(8),$ $\Al_6\cong\PSL_2(9), \PSL_2(11),\PSL_2(27),\PSU_3(4),\PSL_3(4),{}^2{\rm{B}}_2(8)$;

\item $\PSL_2(3^f)$, where $f\ge 7$ is an odd prime, $3^f+1=4r$, $3^f-1=2s$, and $r,s$ are distinct odd primes.
\end{enumerate}
\end{lem}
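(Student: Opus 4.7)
The plan is to argue the two directions separately. For sufficiency, verify $|\Elt(S)|\le 5$ directly for each group in the list; for necessity, invoke CFSG together with the structural results on real elements already cited in the paper and rule out all other nonabelian simple groups.

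For sufficiency, for the finitely many groups in items (1) and (2) I would simply inspect the character tables in the \textsc{Atlas}. For the infinite family in item (3), I would use the standard Weyl-element description of real classes in $\PSL_2(q)$: for $q$ odd, the element $\begin{pmatrix}0&1\\-1&0\end{pmatrix}$ lies in $\SL_2(q)$ and inverts each maximal torus, so every semisimple element of $\PSL_2(q)$ is real; on the other hand, when $q\equiv 3\pmod 4$---which is the case for $q=3^f$ with $f$ odd---the two nontrivial unipotent $\SL_2(q)$-classes are exchanged by inversion, and the corresponding unipotent classes of $\PSL_2(q)$ are non-real. Hence $\Elt(\PSL_2(3^f))$ equals the union of the divisor sets of $(3^f-1)/2$ and $(3^f+1)/2$; the hypothesis $(3^f-1)/2=s$, $(3^f+1)/2=2r$ with $r,s$ odd primes then yields $\Elt(\PSL_2(3^f))=\{1,2,r,s,2r\}$, of size exactly $5$.

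For necessity, $\{1,2\}\subseteq\Elt(S)$ leaves at most three further real orders available. Running through the CFSG classes: alternating groups $\Al_n$ with $n\ge 7$ are eliminated by exhibiting six real orders in $\Al_7$, while $\Al_5,\Al_6$ appear in the list since $\Al_5\cong\PSL_2(4)$ and $\Al_6\cong\PSL_2(9)$; each sporadic simple group is ruled out by a direct \textsc{Atlas} check starting from $M_{11}$. For groups of Lie type I would combine \cite[Proposition 3.2]{Tongviet} (the list of simple groups without a real element of order $4$), \cite[Proposition 6.4]{DNT} (a real odd-prime element exists whenever a real $4$-element does), and \cite[Theorem 3.1]{NST} (two distinct real odd-prime orders, outside a short explicit list). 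Together these show that if $|\Elt(S)|\le 5$ then either $\Elt(S)=\{1,2,4,p_1,p_2\}$ for distinct odd primes $p_1,p_2$, or $S$ lies in the list of \cite[Proposition 3.2]{Tongviet}, restricting attention to $\PSL_2(q)$, $\PSU_3(q)$, $\Sz(q)$ and a few further small groups.

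The main obstacle will be the final reduction within these shortlists. For Lie type groups other than $\PSL_2(q)$, the existence of several maximal tori of pairwise distinct orders together with the prime-characteristic element typically yields more than five real orders, which I would confirm by a case-by-case torus count using the structural data in \cite{DNT}. The most delicate case is $\PSL_2(q)$ itself: $|\Elt(S)|\le 5$ becomes a tight constraint on the divisor sets of $(q\pm 1)/\gcd(2,q-1)$ (and on $p$ when $q\equiv 1\pmod 4$), and elementary number theory then singles out the sporadic values $q\in\{4,8,9,11,27\}$ together with the infinite family of item (3).
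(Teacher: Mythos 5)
Your sufficiency direction is fine, and your treatment of the branch $4\notin\Elt(S)$ (via \cite[Proposition 3.2]{Tongviet}) matches the paper; the genuine gap is in the other branch of the necessity argument. Once you arrive at $\Elt(S)=\{1,2,4,p_1,p_2\}$, none of the results you cite restricts $S$ at all: \cite[Proposition 3.2]{Tongviet} only applies when there is \emph{no} real element of order $4$, and \cite[Theorem 3.1]{NST}, \cite[Proposition 6.4]{DNT} merely produce real elements, they do not classify anything. Your proposed substitute --- a ``case-by-case torus count'' over all remaining groups of Lie type showing they ``typically'' have more than five real orders --- is exactly the hard part of the lemma and is left unexecuted; it would require controlling, for every family and every rank, which torus elements are real and how many distinct orders arise, i.e.\ essentially redoing a classification. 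The paper avoids this with one observation you are missing: $\Elt(S)=\{1,2,4,p_1,p_2\}$ means $S$ has no real element of order $2m$ with $m>1$ odd, so $S$ is a $(C)$-group by \cite[Lemma 2.7]{DGN}, and Suzuki's classification \cite[Theorem 1]{Suzuki1} then pins $S$ down to $\PSL_2(p)$ with $p$ a Fermat or Mersenne prime, $\Al_6$, or $\PSL_2(q)$, ${}^2{\rm B}_2(q)$, $\PSU_3(q)$, $\PSL_3(q)$ with $q>2$ even. Without this reduction your guessed shortlist ``$\PSL_2(q)$, $\PSU_3(q)$, $\Sz(q)$ and a few further small groups'' is not derived from anything and is in fact incomplete: it omits the family $\PSL_3(2^f)$, which survives the reduction and contributes $\PSL_3(4)$ to the final answer, as well as $\PSL_2(p)$ for Fermat/Mersenne primes and $\Al_6$.

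Two smaller points. First, you must set aside at the outset the exceptional groups in \cite[Theorem 3.1]{NST} (these include $\SL_3(2)$, $\PSL_3(3)$, $\PSU_3(3)$, precisely the groups of item (1), which have only one odd prime among their real element orders); you do gesture at a ``short explicit list,'' so this is minor but needs to be done explicitly before asserting $\Elt(S)\supseteq\{1,2,p_1,p_2\}$. Second, once the Suzuki reduction is in place, the remaining eliminations still require the kind of arguments the paper gives in its Cases 1--6 (e.g.\ ${}^2{\rm B}_2(2^{2f+1})$ is excluded because $4^{2f+1}+1=(2^{2f+1}+2^{f+1}+1)(2^{2f+1}-2^{f+1}+1)$ is divisible by $5$, and $\PSU_3(2^f)$, $\PSL_3(2^f)$ are handled through their $\SL_2(2^f)$ subgroups); your $\PSL_2(q)$ analysis is essentially the paper's Case 2 and is fine as a sketch. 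The one concrete missing idea, and the one you should add, is the $(C)$-group characterization together with Suzuki's theorem.
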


\begin{proof}
By \cite[Theorem B]{Tongviet}, we have $4\leq |\Elt(S)|\leq 5.$  If
$S\cong \PSL_3(3)$, $\PSU_3(3)$ or $\SL_3(2)$, then $|\Elt(S)|=5$
and these groups appear in part $(1).$ Assume that $S$ is not isomorphic
to one of these groups.  By \cite[Theorem 3.1]{NST}, $\Elt(S)$
contains at least two distinct odd primes, say $p_1$ and $p_2$.

Assume first that $|\Elt(S)|= 4.$ Then $\Elt(S)=\{1,2,p_1,p_2\}$. It follows that $S$ is a $(C)$-group and thus $S\cong\Al_5$ by
\cite[Theorem 3.1]{Tongviet}. 

Assume next that $|\Elt(S)|= 5.$ Assume that $4\in\Elt(S)$.
Then $\Elt(S)=\{1,2,4,p_1,p_2\}$. In particular, $S$ has no real
element of order $2m$ with $m>1$ odd. By
\cite[Theorem 1]{Suzuki1}, $S$ is isomorphic to $\PSL_2(p)$ where $p$ is a
Fermat or a Mersenne prime; $\Al_6$; or
$\PSL_2(q),{}^2\textrm{B}_2(q),\PSU_3(q)$, or $\PSL_3(q)$ where $q>2$
is a power of $2$. Assume that $4\not\in\Elt(S)$. By
\cite[Proposition 3.2]{Tongviet} $S$ is isomorphic to one of the
following simple groups:
$$
 \begin{array}{c} \SL_2(2^f) (f\ge 3), \PSU_3(2^f) (f\ge 2), {}^2{\rm B}_2(2^{2f+1}) (f\ge 1); \\
 \PSL_2(q) (5\leq q\equiv 3,5 {(\text{mod $8$})}),
{\rm J}_1, {}^2{\rm G}_2(3^{2f+1}) (f\ge 1).
\end{array}
$$

We can check that $\Al_6$ has five distinct real element orders but
$\textrm{J}_1$ has more than five distinct real element order. So we
may assume that $S$ is not one of these two groups. We now consider
the following cases.

\smallskip

\textbf{Case 1}: $S\cong \SL_2(2^f),f\ge 3.$ If $3\leq f\leq 6$,
then we can check that only $\SL_2(8)$ has exactly five distinct
real element orders. So, assume $f>6.$ Since $\SL_2(2^f)$ contains
real elements $x$ and $y$ of order $2^f-1$ and $2^f+1$,
respectively, together with real elements of order $1$ and $2$, we
deduce that one of the numbers $2^f\pm 1$ is an odd prime and the
other is a square of an odd prime. Since $f\ge 6$, we can check that
this cannot occur.

\smallskip

\textbf{Case 2}: $S\cong \PSL_2(q)$, $q=p^f$, where $f\ge 1$ and
$p>2$ is a prime. Using \cite{GAP}, if $q\leq 37$, then $q\in
\{9,11,27\}$. Assume that $q\equiv \epsilon$ mod $4$, $\epsilon=\pm
1. $ In this case, $S$ has real elements of order order
$(q-\epsilon)/2$ and $(q+\epsilon)/2$, respectively. Note that
$(q-\epsilon)/2$ is even.

Assume that $(q-\epsilon)/2=2^{a}$ for some integer $a\ge 1.$ Since
$q>37$, $2^a\ge 16$ and thus $S$ has real elements of order
$1,2,4,8,16$ and $(q+\epsilon)/2$, which is a contradiction. Thus
$(q-\epsilon)/2$ is divisible by $2r$ for some odd prime $r$. Let
$s$ be a prime divisor of $(q+\epsilon)/2$. Then
$\{1,2,r,s,2r\}\subseteq \Elt(S)$ and since $|\Elt(S)|=5$,
$(q-\epsilon)/2=2r$ and $(q+\epsilon)/2=s$, where $r,s$ are distinct
odd primes. If $p>3$, then since $3\mid q^2-1$, we must have $r=3$ or
$s=3$, which is not the case as $q>37$.  Therefore $p=3$ and
$q=3^f>37$ so $f\ge 4.$ If $f$ is even, then $q\equiv 1$ mod $8$ and
thus $(q-1)/2$ is divisible by $4$ which is impossible. Thus
$f\ge 5$ is odd and so $\epsilon=-1$. Hence $(3^f+1)/2=2r$ and
$(3^f-1)/2=s.$ The latter equation forces $f$ to be a prime. This is
part $(3)$ of the lemma. Direct calculation shows that $f\ge 7.$

\smallskip

\textbf{Case 3}: $S\cong \PSU_3(2^f),f\ge 2.$ In this case, $S$ has
a subgroup $T\cong \SL_2(2^f)$.  From Case 1, we must have $f=2$ or
$3$. However $|\Elt(\PSU_3(8))|=6$ so $S\cong \PSU_3(4)$.

\smallskip

\textbf{Case 4}: $S\cong{}^2\textrm{B}_2(2^{2f+1}),f\ge 1.$ If
$f=1$, then we can check that $S$ satisfies the hypothesis of the
lemma. Assume $f\ge 2.$ By \cite[Theorem 9 and Proposition 16]{Suzuki2}, $S$ has three nontrivial real elements of odd
distinct orders, which are $2^{2f+1}-1$ and $2^{2f+1}\pm2^{f+1}+1$.
As $|\Elt(S)|=5$, all of these numbers must be primes. Hence
$$4^{2f+1}+1=(2^{2f+1}+2^{f+1}+1)(2^{2f+1}-2^{f+1}+1)$$ is a product
of two distinct primes. Since $5$ divides  $4^{2f+1}+1$, we deduce
that $$2^{2f+1}-2^{f+1}+1=5$$ which is impossible as $f\ge 2.$

\smallskip

\textbf{Case 5}: $S\cong \PSL_3(2^f),f\ge 2.$ As $S$ contains a
subgroup isomorphic to $\SL_2(2^f)$, we deduce that $f=2$ or $3$.
Using \cite{GAP}, only $\PSL_3(4)$ satisfies the hypothesis of the
lemma.

\smallskip

\textbf{Case 6}: $S\cong {}^2{\rm G}_2(3^{2f+1}),f\ge 1.$ In this
case, $S$ contains subgroups isomorphic to $\PSL_2(3^{2f+1})$ and
$\PSL_2(8)$. Since $|\Elt(\PSL_2(8))|=5$, we deduce that
$$\Elt(\PSL_2(3^{2f+1}))\subseteq \Elt(\PSL_2(8))=\{1,2,3,7,9\}.$$
Thus $(3^{2f+1}+1)/2\leq 9$ as $\PSL_2(3^{2f+1})$ has a real element
of order $(3^{2f+1}+1)/2$. Therefore,  $3^{2f+1}\leq 17$ which is
impossible as $f\ge 1.$

Conversely, if $S$ is one of the simple groups in $(1)$-$(3)$, then we
can check that $S$ has at most $5$ distinct real element orders.
\end{proof}

\begin{lem}\label{lem:5 real characters-almost simple}
Let $G$ be an almost simple group with a nonabelian simple socle
$S$. Then
\begin{enumerate}[$(1)$]
\item $G$ has exactly four real-valued irreducible characters if and only if $G\cong\SL_3(2)$.

\item $G$ has exactly five real-valued irreducible characters if and only if
$G$ is isomorphic to $\Al_5,\PSL_2(8)\cdot 3$ or ${}^2{\rm B}_2(8)\cdot 3.$
\end{enumerate}
\end{lem}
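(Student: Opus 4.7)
The strategy combines Brauer's permutation lemma with Lemma~\ref{lem:simple 5 element orders}. Brauer's lemma identifies the number of real-valued irreducible characters of $G$ with the number of real conjugacy classes of $G$; in particular, if $G$ has at most five real-valued irreducible characters, then $|\Elt(G)| \leq 5$, and since $\Elt(S) \subseteq \Elt(G)$ we also have $|\Elt(S)| \leq 5$. Lemma~\ref{lem:simple 5 element orders} then constrains $S$ to one of the eleven simple groups listed in parts~(1)--(2) or to $\PSL_2(3^f)$ with $f \geq 7$ an odd prime as in part~(3).

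For each of the eleven small candidates and each almost simple $G$ with $S \trianglelefteq G \leq \Aut(S)$, one reads off the number of real-valued irreducible characters from the character table, available for instance in \cite{GAP} or the Atlas. This finite inspection yields the claimed classification: the count equals $4$ precisely when $G \cong \SL_3(2)$ (whose two degree-three characters are complex conjugates, giving four real-valued irreducible characters), and equals $5$ precisely when $G$ is $\Al_5$, $\PSL_2(8) \cdot 3$, or ${}^2{\rm{B}}_2(8) \cdot 3$.

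The crucial step is ruling out the infinite family $S = \PSL_2(3^f)$. Here $q = 3^f \equiv 3 \pmod{4}$, so $-1 \in \bbF_q^\times$ is a nonsquare; consequently the two unipotent conjugacy classes of $S$ are exchanged by inversion and are the only non-real classes of $S$, so that the total count $(q+5)/2$ of conjugacy classes yields $k_\R(S) = (q+1)/2$. Any almost simple $G$ with $S \leq G \leq \Aut(S)$ satisfies $|G/S| \leq |\Out(S)| = 2f$, and distinct $G$-orbits on real $S$-classes produce distinct real $G$-classes contained in $S$. Therefore
\[
k_\R(G) \;\geq\; \frac{k_\R(S)}{|G/S|} \;\geq\; \frac{3^f+1}{4f},
\]
which already exceeds $78$ at $f = 7$ and grows rapidly with $f$, contradicting the assumption $k_\R(G) \leq 5$. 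The conceptual difficulty is concentrated in this generic bound for the infinite family; the finite-socle inspection is mechanical.
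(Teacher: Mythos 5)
Your proposal is correct and follows essentially the same route as the paper: Brauer's permutation lemma plus Lemma~\ref{lem:simple 5 element orders} reduce the problem to a finite GAP/Atlas inspection of the eleven small socles together with an exclusion of the infinite family $S=\PSL_2(3^f)$ by bounding the real classes of $G$ from below by those of $S$ divided by $|\Out(S)|=2f$. The only (harmless) difference is in that exclusion: the paper counts the $(s-1)/2$ real $S$-classes of elements of order $s=(3^f-1)/2$ and plays them off against the four real classes of orders $1,2,r,2r$, whereas you use the full count $k_\R(S)=(q+1)/2$ for $q\equiv 3\pmod 4$ — both yield the required contradiction for $f\geq 7$.
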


\begin{proof} Part (1) follows from \cite[Theorem 3.3]{Tongviet}.
Assume that $G$ is an almost simple group with a nonabelian simple
socle $S$ and that $G$ has exactly five real-valued irreducible
characters. By Brauer's Lemma on character tables, $G$ has exactly
five conjugacy classes of real elements and thus $|\Elt(G)|\leq 5.$
Hence $|\Elt(S)|\leq 5$ as $\Elt(S)\subseteq \Elt(G)$. Therefore $S$
is one of the simple groups appear in the conclusion of Lemma
\ref{lem:simple 5 element orders}. If $S$ is one of the groups in
$(1)-(2)$ of Lemma \ref{lem:simple 5 element orders}, then by using \cite{GAP}, $G$ is isomorphic to
$\Al_5,\PSL_2(8)\cdot 3$ or ${}^2{\rm B}_2(8)\cdot 3$.

Now assume that $S\cong\PSL_2(q)$ with $q=3^{f}$, where $f\ge 7$ is
a prime, $3^f+1=4r$ and $3^f-1=2s$, where $r,s$ are distinct odd
primes. Let $x\in S$ be a real element of order $s$. Then $\langle
x\rangle$ is a Sylow $s$-subgroup of $S$ and its normalizer  in $S$
is a dihedral group of order $2s$. It follows that $S$ has $(s-1)/2$
conjugacy classes of real elements of order $s$. Since
$|\Out(S)|=2f$, we see that $G$ has at least $(s-1)/(4f)$ conjugacy
classes of real elements of order $s$. Since $G$ already has $4$
conjugacy classes of real elements of orders $1,2,r,2r,$ we deduce
that $G$ must have exactly one conjugacy class of real element of
order $s$. Since $(s-1)/(4f)=(3^f-3)/(8f)$ and $f\ge 7$ is a prime,
we can check that $(3^f-3)/(8f)>1$. Thus this case cannot occur.
\end{proof}


The next theorem proves Theorem \ref{fivervs}, and provides additional information.

\begin{thm}\label{th: 5 real characters}
Let $G$ be a finite group. Assume that $G$ has at most 
five real-valued irreducible characters. Then $G$ is either solvable or $G/\Sol(G)\cong
\SL_3(2),\Al_5,\PSL_2(8)\cdot 3,$ ${}^2{\rm{B}}_2(8)\cdot 3$. Moreover,
if $|\Irr_\RR(G/\Sol(G))|=5,$ then one of the following holds.
\begin{enumerate}[$(1)$]
\item $G\cong \Al_5\times K$, where $K$ is of odd order.
\item $G\cong (L\times K)\cdot 3$, where $L\cong\PSL_2(8)$ or ${}^2{\rm{B}}_2(8)$ and $K$ is of odd order.
\end{enumerate}
\end{thm}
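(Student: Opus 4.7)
The first assertion follows by applying Lemmas~\ref{lem:reduction} and~\ref{lem:5 real characters-almost simple} to $\bar G:=G/\Sol(G)$. Inflation gives $|\Irr_\RR(\bar G)|\leq|\Irr_\RR(G)|\leq 5$, and by Brauer's permutation lemma $|\Elt(\bar G)|\leq 5$. Since $\Sol(\bar G)=1$, Lemma~\ref{lem:reduction} shows $\bar G$ is trivial or almost simple, and Lemma~\ref{lem:5 real characters-almost simple} combined with the previously cited fact that nonsolvable groups have at least four real-valued irreducible characters -- with equality only at $\SL_3(2)$ -- identifies $\bar G$ as one of $\{1,\SL_3(2),\Al_5,\PSL_2(8)\cdot 3,{}^2{\rm{B}}_2(8)\cdot 3\}$.

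For the Moreover part, assume $|\Irr_\RR(\bar G)|=5$, so $\bar G\in\{\Al_5,\PSL_2(8)\cdot 3,{}^2{\rm{B}}_2(8)\cdot 3\}$. The equality $|\Irr_\RR(\bar G)|=|\Irr_\RR(G)|=5$ forces every $\chi\in\Irr_\RR(G)$ to have $N:=\Sol(G)$ in its kernel, and the Frobenius--Schur count yields
\[
|\{g\in G:g^2=1\}|=\sum_{\chi\in\Irr_\RR(G)}\nu_2(\chi)\chi(1)=|\{\bar g\in\bar G:\bar g^2=1\}|.
\]
The plan is to first show $|N|$ is odd (Step~1), and then produce the direct factor decomposition (Step~2).

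\emph{Step 1.} Induct on $|G|$. For any $1\neq M\triangleleft G$ with $M\leq N$, one has $(G/M)/\Sol(G/M)\cong\bar G$, and by Lemma~\ref{lem:5 real characters-almost simple}(1) combined with the inflation inequality, $|\Irr_\RR(G/M)|=5$; hence the inductive hypothesis applies to $G/M$, yielding $|N/M|$ odd. Choosing $M$ minimal normal in $G$ with $M\leq N$, it remains to exclude $M$ being an elementary abelian $2$-group. In that case Schur--Zassenhaus and a Frattini-type argument split $N=M\rtimes K^*$ with $K^*$ of odd order centralised by the preimage $H$ of the simple socle $L$ of $\bar G$, so that $G\cong H\times K^*$ and $|\Irr_\RR(G)|=|\Irr_\RR(H)|$. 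One then verifies case-by-case that $|\Irr_\RR(H)|>5$: if $M$ is central in $H$ (so $H$ is a central extension of $L$ by $M$), the Schur multipliers of $\Al_5$, $\PSL_2(8)$, ${}^2{\rm{B}}_2(8)$ (respectively $C_2$, $1$, $C_2\times C_2$) together with a direct count force a contradiction (for instance $|\Irr_\RR(2.\Al_5)|=9$ or $|\Irr_\RR(M\times L)|=|M|\cdot|\Irr_\RR(L)|$); if $M$ is a non-trivial simple $\bbF_2[L]$-module, then every non-trivial $\lambda\in\Irr(M)$ is self-conjugate (since $M$ has exponent $2$), and Clifford theory applied to the orbit $H\cdot\lambda$ produces at least $|\Irr(I_H(\lambda)/M)|$ additional real-valued irreducible characters of $H$. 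Either way $|\Irr_\RR(G)|>5$, a contradiction; thus $|N|$ is odd.

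\emph{Step 2.} Let $L^*:=G^{(\infty)}$ denote the perfect residual of $G$ (the intersection of its derived series). Since $\bar G^{(\infty)}=L$ and the kernel $L^*\cap N$ of the surjection $L^*\twoheadrightarrow L$ has odd order and embeds centrally in the Schur multiplier of $L$ (trivial or a $2$-group in every case), we obtain $L^*\cap N=1$ and $L^*\cong L$, whence $L^*N=H$ (the preimage of $L$ in $G$). Simplicity of $L^*$ forces the conjugation map $L^*\to\Aut(N)$ to have kernel $1$ or $L^*$; a non-trivial action is excluded because an involution $t\in L^*$ acting faithfully on $N$ must act non-trivially on some abelian characteristic section $A$ of $N$, and the coprime decomposition $A=A^t\oplus A^{-t}$ (valid since $|A|$ is odd and $|t|=2$) with $A^{-t}\neq 0$ produces a non-trivial linear character $\lambda\in\Irr(N)$ satisfying $t\cdot\lambda=\lambda^{-1}$. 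The resulting self-conjugate $L^*$-orbit $L^*\cdot\lambda$ then yields, via Clifford induction, a real-valued irreducible character of $G$ not containing $N$ in its kernel -- contradicting $|\Irr_\RR(G)|=5$. Hence $L^*$ centralises $N$ and $H=L^*\times N$; in Case~(1) $\bar G=L=\Al_5$ gives $G=H=\Al_5\times K$ with $K:=N$, and in Case~(2) lifting the outer $C_3=\bar G/L$ via Schur--Zassenhaus yields $G\cong(L\times K)\cdot 3$. The main obstacle is Step~1: the Clifford-theoretic and cohomological case analysis excluding elementary abelian $2$-minimal normal subgroups of $N$, which depends delicately on the $\bbF_2[\bar G]$-module structure and Schur multipliers of each of the three possibilities for $\bar G$.
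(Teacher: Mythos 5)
Your treatment of the first assertion matches the paper's: inflate to $\bar G=G/\Sol(G)$, apply Lemma~\ref{lem:reduction} and Lemma~\ref{lem:5 real characters-almost simple}. The ``Moreover'' part, however, departs entirely from the paper's argument and contains genuine gaps. In Step~1, the claim that Schur--Zassenhaus plus ``a Frattini-type argument'' yields $G\cong H\times K^*$ is unjustified and, as written, inconsistent: the preimage $H$ of $L$ in $G$ contains all of $N$, hence contains $K^*$, so it cannot be a direct complement to $K^*$; a Frattini argument only gives $G=M\bN_G(K^*)$ and does not make $K^*$ normal, let alone a centralised direct factor. Moreover, in both Step~1 and Step~2 you assert that a self-conjugate (orbit of) linear character(s) $\lambda$ of $M$ or $N$ produces real-valued irreducible characters of the ambient group lying over it. That is not automatic: reality of $\lambda$ only guarantees that complex conjugation permutes $\Irr(G\mid\lambda)$, and these characters may pair off with no fixed points; extracting an actually real constituent needs an extendibility or parity argument you do not supply. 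Step~2 has a further error: you claim $L^*\cap N$ is central in $L^*=G^{(\infty)}$ and hence embeds in the Schur multiplier of $L$. A perfect group with a simple quotient by a solvable normal subgroup need not be quasisimple --- for instance $V\rtimes\Al_5$ with $V$ a nontrivial irreducible odd-order module is perfect with non-central solvable kernel --- so the conclusion $L^*\cong L$, on which the rest of Step~2 rests, is not established.

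The paper's proof sidesteps all of this by counting real \emph{elements} rather than characters. Since $\bar G$ has exactly three classes of nontrivial real elements of odd order and such classes lift to $G$ by \cite[Lemma 2.2]{GNT}, $\Sol(G)$ has no nontrivial real element of odd order and hence a normal Sylow $2$-subgroup by \cite[Proposition 6.4]{DNT}; the same count shows $G$ is a $(C)$-group with no real element of order $4$, whence \cite[Theorem 2.3]{Tongviet} gives that $\OO^{2'}(G)$ is isomorphic to $\SL_2(2^f)$ or ${}^2{\rm B}_2(2^{2f+1})$, i.e.\ is already simple. From there $K=\bC_G(L)$ has odd order (an involution in $K$ times a real odd-order element of $L$ would violate the $(C)$-property) and the direct-product structure follows at once. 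If you wish to keep a character-theoretic route, you would need to repair the three gaps above; otherwise the element-order argument is both shorter and already available from the cited results.
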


\begin{proof}
We may assume that $G$ is nonsolvable and 
$|\Irr_\R(G)|\leq 5.$  Then $|\Irr_\R(G/\Sol(G))|\leq 5$ and thus  $|\Elt(G/\Sol(G))|\leq 5$. By Lemma
\ref{lem:reduction},  $G/\Sol(G)$ is an almost simple group.
 It follows from \cite[Theorem B]{Tongviet} that $G/\Sol(G)$ has at least
four real-valued irreducible characters; hence $4\leq
|\Irr_\R(G/\Sol(G))|\leq 5.$ Now Lemma \ref{lem:5 real
characters-almost simple} yields the first part of the theorem.

Assume that $|\Irr_\R(G/\Sol(G))|=5.$ By Lemma \ref{lem:5 real
characters-almost simple}, $G/\Sol(G)\cong \Al_5,\PSL_2(8)\cdot 3$
or ${}^2\textrm{B}_2(8)\cdot 3.$ In all cases, $G/\Sol(G)$ has $3$
conjugacy classes of nontrivial real elements of odd orders. As real
elements of odd order of $G/\Sol(G)$ lift to real elements of odd
order of $G$ by \cite[Lemma 2.2]{GNT}, $G$ has three conjugacy
classes of nontrivial real elements of odd orders. It follows that
$\Sol(G)$ has no nontrivial real element of odd order and thus
$\Sol(G)$ has a normal Sylow $2$-subgroup by \cite[Proposition
6.4]{DNT}. Moreover, as $|\Irr_\R(G)|\leq 5,$ the above argument shows that $G$ has no real element of order $2m$ with $m>1$ being odd, so $G$ is a
$(C)$-group and has
no real element of order $4$. By \cite[Theorem 2.3]{Tongviet},
\[L=\OO^{2'}(G)\cong \SL_2(2^f) (f\ge 2) \text{ or }
{}^2\textrm{B}_2(2^{2f+1}) (f\ge 1).\] It follows that $L\cong
\Al_5,\PSL_2(8) \text{ or } {}^2\textrm{B}_2(8)$ as these are the
only possible nonabelian composition factors of $G$.

Let $K:=\Centralizer_G(L)$. Then $K\cap L=1$ and $K\times L\unlhd
G.$ Since $G$ is a $(C)$-group, we deduce that $|K|$ is odd. Now
$G/K$ is isomorphic to a subgroup of $\Aut(L)$ and $|\Irr_\R(G/K)|\leq
5,$ we conclude that either $G=K\times \Al_5$ or $G=(L\times
K)\cdot 3$, where $L\cong \PSL_2(8)$ or ${}^2\textrm{B}_2(8)$, as
claimed.
\end{proof}

\section{Real-valued characters of almost simple groups} \label{Infinity}
In this section we prove Theorem \ref{thm:mainsimple}.  We begin with the following observation:

\begin{lemma}\label{lem:roughbound}
Keep the notation as in Theorem \ref{thm:mainsimple}. Then
$k_\bbR(G|S) = k_\bbR(G) - k_\bbR(G/S)$,  $k_\bbR(G) \geq k_\bbR(S) / |\out(S)|,$ and $ k_\bbR(G/S) \leq |\out(S)|.$
In particular, we have
\[k_\bbR(G|S) \geq k_\bbR(S)/|\out(S)| - |\out(S)|.\]
\end{lemma}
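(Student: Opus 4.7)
The plan is to establish the three stated assertions one at a time and then combine them; the argument is elementary, and the only step that requires any care is the lower bound on $k_\bbR(G)$.

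For the identity $k_\bbR(G|S) = k_\bbR(G) - k_\bbR(G/S)$, I would partition $\Irr(G)$ into those characters with $S \leq \ker\chi$ and those with $S \not\leq \ker\chi$; the latter set is by definition indexed by $k_\bbR(G|S)$ among real characters. Inflation gives a bijection from $\Irr(G/S)$ onto the former set which preserves character values and hence commutes with complex conjugation, so restricting to real-valued characters and subtracting gives the identity.

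The bound $k_\bbR(G/S) \leq |\out(S)|$ is immediate from the chain $k_\bbR(G/S) \leq k(G/S) \leq |G/S|$, together with the fact that $G/S \hookrightarrow \out(S)$ since $S \trianglelefteq G \leq \Aut(S)$.

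For the lower bound $k_\bbR(G) \geq k_\bbR(S)/|\out(S)|$, I would invoke Brauer's permutation lemma (mentioned in the Introduction) to replace real-valued irreducibles with conjugacy classes of real elements, both in $G$ and in $S$. The group $G$ acts by conjugation on the set of real $S$-classes; each orbit has size dividing $|G/S| \leq |\out(S)|$, so there are at least $k_\bbR(S)/|\out(S)|$ orbits. Each orbit of $S$-classes is precisely a single $G$-conjugacy class contained in $S$, and since each constituent $S$-class is already closed under inversion, this enlarged $G$-class is real in $G$: for any $x$ in it, $x^{-1}$ lies in the same $S$-class and hence in the same $G$-class. Distinct orbits yield distinct $G$-classes, giving at least $k_\bbR(S)/|\out(S)|$ real $G$-classes in $G$. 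Combining with the previous two parts yields the final inequality $k_\bbR(G|S) \geq k_\bbR(S)/|\out(S)| - |\out(S)|$. The only subtlety throughout is the bookkeeping around the identification ``real $S$-class inside $S$ $\Rightarrow$ real $G$-class in $G$'', and this is clarified by the individual-class-closed-under-inversion observation above.
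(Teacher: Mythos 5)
Your proof is correct and is essentially the elementary argument the paper intends (the lemma is stated there as an observation without a written proof): inflation handles the characters with $S\leq\ker\chi$, the embedding $G/S\hookrightarrow\Out(S)$ gives $k_\bbR(G/S)\leq|\Out(S)|$, and Brauer's permutation lemma together with the fact that each real $G$-class inside $S$ is a union of at most $|G/S|\leq|\Out(S)|$ real $S$-classes gives $k_\bbR(G)\geq k_\bbR(S)/|\Out(S)|$. Your handling of the one subtle point---that the $G$-class containing a real $S$-class is itself real because inverses already stay within the $S$-class---is exactly right, so there is nothing to add.
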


For $S$ a finite nonabelian simple group, we will write $$\mathfrak{K}(S):= k_\bbR(S)/|\out(S)| - |\out(S)|.$$ Hence, to prove Theorem \ref{thm:mainsimple}, it suffices to show that $\mathfrak{K}(S)\rightarrow\infty$ as $|S|\rightarrow\infty$.  We remark that due to the nature of the statement of Theorem \ref{thm:mainsimple}, we may disregard a finite number of simple groups.  Recall that we may view $k_\bbR(S)$ as either the number of real-valued irreducible characters or the number of real conjugacy classes of $S$.

\subsection{Initial Considerations}

Throughout, when $q$ is a power of a prime $p$, we will write $\nu(q)$ for the positive integer such that $q=p^{\nu(q)}$.

\begin{lemma}\label{lem:initial1}
Let $S$ be a simple group isomorphic to the alternating group $\Al_n$ for $n\geq 5$, or a simple group of Lie type   ${}^2{\rm B}_2(q)$,  ${}^2{\rm G}_2(q)$, ${\rm G}_2(q)$, ${}^3{\rm D}_4(q)$, ${\rm D}_4(q)$, ${}^2{\rm F}_4(q),$ ${\rm F}_4(q)$, or ${}^2{\rm D}_{2n}(q)$ with $n\geq 2$ for $q$ a power of a prime.  Or assume $q\not\equiv 3\pmod 4$ and that $S$ is a simple group of Lie type ${\rm B}_n(q)$ with $n\geq 3$, ${\rm C}_n(q)$ with $n\geq 1$, or ${\rm D}_{2n}(q)$ with $n\geq 3$. Then $\mathfrak{K}(S) \rightarrow\infty$ as $|S| \rightarrow\infty$.
\end{lemma}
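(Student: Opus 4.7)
The plan is, for each family in the statement, to bound $|\out(S)|$ from above and $k_\bbR(S)$ from below by quantities growing at incomparable rates, so that $\mathfrak{K}(S) = k_\bbR(S)/|\out(S)| - |\out(S)|$ diverges.  For all families considered, $|\out(S)|$ grows at most logarithmically in $|S|$:  $|\out(\Al_n)|\le 4$ for $n\ge 5$, while for $S$ of Lie type over $\bbF_q$ with $q=p^f$ one has $|\out(S)|\le c\cdot f$ for a constant $c$ depending only on the type.  Thus any polynomial lower bound on $k_\bbR(S)$ in the natural parameter ($n$ or $q$) will suffice.

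For the alternating groups I would combine the classical parameterization $\irr(\Sy_n)=\{\chi^\lambda:\lambda\vdash n\}$, the ambivalence of $\Sy_n$ (since $\sigma$ and $\sigma^{-1}$ always have the same cycle type), and Clifford theory for $\Al_n\trianglelefteq\Sy_n$.  Each non-self-conjugate partition $\lambda$ restricts to a single irreducible real-valued character of $\Al_n$, and $\lambda$ and its conjugate $\lambda'$ yield the same character, so
\[
k_\bbR(\Al_n) \ \ge\ \tfrac{1}{2}\bigl(p(n) - sc(n)\bigr),
\]
where $p(n)$ is the partition function and $sc(n)$ the number of self-conjugate partitions of $n$.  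The right side tends to infinity, and $|\out(\Al_n)|\le 4$, so $\mathfrak{K}(\Al_n)\to\infty$.

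For the Lie-type families, the plan is to count real semisimple conjugacy classes.  All of the listed types are ``$-1$-types'': the Weyl group, or in the twisted cases the appropriate $F$-fixed quotient, contains $w_0=-1$ on a maximal torus, so every semisimple element of the ambient simply connected algebraic group $\widetilde{\mathbf G}$ is conjugate to its inverse.  Standard Deligne--Lusztig and Lang--Steinberg bookkeeping then yields on the order of $q^r$ real semisimple conjugacy classes in $\widetilde{\mathbf G}^F$ (where $r$ is the Lie rank of $S$), which dominates $|\out(S)|^2 = O((\log q)^2)$.  For the $\rB_n$, $\rC_n$, $\rD_{2n}$ families the hypothesis $q\not\equiv 3\pmod 4$ ensures $-1\in (\bbF_q^\times)^2$, so that the reality statement (Gow's theorem, asserting that every semisimple element of $\Sp_{2n}(q)$ or $\Omega_{2n+1}(q)$ is real for $q$ odd) descends cleanly to the simple quotient $S=\widetilde{\mathbf G}^F/Z(\widetilde{\mathbf G}^F)$.

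The main obstacle is this descent step:  a class that is real in $\widetilde{\mathbf G}^F$ need not remain real in $S$ when the conjugating element is only available modulo a nontrivial central element.  For the exceptional and twisted families of the lemma the centers of $\widetilde{\mathbf G}^F$ are small (often trivial), and the descent is essentially automatic; for the classical families under the condition $q\not\equiv 3\pmod 4$, an explicit analysis of $Z(\widetilde{\mathbf G}^F)$ together with the fact that $-1$ is a square in $\bbF_q^\times$ removes the central obstruction and yields the required real-class lower bound on $S$.
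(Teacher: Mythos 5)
Your proposal is correct in substance but follows a genuinely different route from the paper for the Lie-type families. The paper gets its lower bound on $k_\bbR(S)$ by quoting strong reality results for whole character tables or whole groups: for ${\rm G}_2(q)$, ${}^2{\rm G}_2(q)$, ${}^2{\rm B}_2(q)$, ${}^2{\rm F}_4(q)$ it reads off from the generic character tables in CHEVIE that all but a bounded number (two, six, two, twelve) of irreducible characters are real; for ${\rm F}_4(q)$ it cites Trefethen--Vinroot; and for the remaining listed groups it cites Tiep--Zalesskii's Theorem 1.2, which says \emph{every} element of $S$ is real --- this citation is exactly where the hypothesis $q\not\equiv 3\pmod 4$ enters, since for $q\equiv 3\pmod 4$ not all elements of, say, $\PSp_{2n}(q)$ are real. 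Combined with the fact that the class number is a polynomial in $q$ of degree governed by the rank and with $|\out(S)|\leq 24\nu(q)$, the paper concludes. You instead produce many real classes by hand: $w_0=-1$ for all the types in question, semisimple centralizers in the simply connected group are connected, so Lang--Steinberg makes every semisimple element of the finite simply connected group real, and Steinberg's count gives on the order of $q^r$ semisimple classes. This is more uniform and self-contained (no generic tables needed), and in fact it would prove the statement \emph{without} the congruence hypothesis, since you only need many real classes rather than all classes real. Your treatment of $\Al_n$ (counting restrictions of $\chi^\lambda$ for non-self-conjugate $\lambda$) is the character-side mirror of the paper's count of non-splitting real classes and is fine.

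One correction to the descent step you flag as the ``main obstacle'': reality always passes to a quotient --- if $gxg^{-1}=x^{-1}$ in $G={\mathbf G}^F$, the image of $g$ inverts the image of $x$ in $S=G/Z(G)$ --- so no condition such as $-1\in(\bbF_q^\times)^2$ is needed there, and invoking Gow's theorem or the congruence on $q$ for this purpose is beside the point. The genuine (and easy) issue in passing from $G$ to $S$ is that distinct real classes of $G$ may fuse in $S$, namely $[x]$ and $[xz]$ for $z\in Z(G)$ can have the same image; since $|Z(G)|\leq 4$ for all the types you consider, this costs only a bounded factor and your $q^r$-type lower bound survives. With that repair, and with the Lang--Steinberg ``bookkeeping'' (connectedness of $\Centralizer_{\mathbf G}(s)$ in the simply connected group) spelled out, your argument is complete.
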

\begin{proof}
Note that every conjugacy class of the symmetric group $\Sy_n$ is real, and that a class in $\Sy_n$ yields exactly one class in $\Al_n$ if and only if the cycle type of the elements in the classes contain an even cycle or two cycles of the same length.  Since the number of cycle types of this form is increasing with $n$ and $|\out(\Al_n)|=2$ for $n\geq 7$, we see that the statement holds if $S = \Al_n$.

If $S$ is ${\rm G}_2(q)$, ${}^2{\rm G}_2(q)$, ${}^{2}{\rm B}_2(q)$, or ${}^{2}{\rm F}_4(q)$, then observing the generic character tables available in CHEVIE \cite{chevie}, we see that all except two, six, two, or twelve, respectively, of the characters are real-valued.  If $S$ is ${\rm F}_4(q)$, then all except four of the characters are real-valued, using \cite[Theorem 4.1]{TV19}.  If $S$ is one of the remaining simple groups of Lie type listed, then \cite[Theorem 1.2]{tiepzalesski05} yields that every element of $S$ is real.

Further, $\out(S)\leq 24\nu(q)$ where $q=p^{\nu(q)}$ for a prime $p$.  Hence since the number of classes in $S$ can be written as a polynomial in $p$ whose exponents are in terms of $n$ and $\nu(q)$, the statement also holds in these cases.
\end{proof}

\subsection{Fixed Parameters and Classical Groups}

\begin{proposition}\label{prop:fixrank}
Let $\mathbb{S}$ be a family of simple groups of Lie type with the same type and rank.  That is, there is some generic reductive group $\mathbb{G}$ as in \cite[Section 2.1]{brouemalle} of simply connected type such that for each $S\in\mathbb{S}$, $S$ is of the form $G/Z(G)$ where $G=\mathbb{G}(q)$ for some prime power $q$.
Then for $S\in\mathbb{S}$, we have $\mathfrak{K}(S) \rightarrow\infty$ as $q \rightarrow\infty$.  
\end{proposition}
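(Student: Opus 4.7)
The plan is to exploit the tension between the polynomial growth in $q$ of the class number of $S$ and the merely logarithmic growth of $|\out(S)|$. For a family $\mathbb{S}$ of fixed type and rank $r$, it is a standard consequence of the generic character table framework that $k(G)$ for $G=\mathbb{G}(q)$ is a polynomial in $q$ of degree $r$; since $|Z(G)|$ is bounded by a constant depending only on the type, the same holds for $k(S)=k(G/Z(G))$. On the other hand, $|\out(S)|\leq d\cdot\nu(q)$ for a constant $d$ depending only on the type. Hence it suffices to show that $k_\bbR(S)$ grows at least linearly in $q$, for then
\[
\mathfrak{K}(S) \;=\; k_\bbR(S)/|\out(S)| \;-\; |\out(S)| \;\longrightarrow\; \infty.
\]

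The task therefore reduces to exhibiting an explicit polynomial family of real conjugacy classes in $S$. Lemma \ref{lem:initial1} already handles many types, leaving us to consider types $A_n$, ${}^2A_n$ (for $n\geq 2$), $E_6$, ${}^2E_6$, $E_7$, $E_8$, $D_{2n+1}$, ${}^2D_{2n+1}$, together with the odd-characteristic cases of $B_n$, $C_n$, $D_{2n}$ with $q\equiv 3\pmod 4$. When the (twisted) Weyl group $W^\sigma$ of $\mathbb{G}$ contains an element acting as $-1$ on the root lattice — as in $B_n, C_n, D_{2n}, E_7, E_8$ — every element of a suitable rational maximal torus $T\leq G$ is $G$-real, producing roughly $|T|/|W|$ distinct real semisimple classes, which is polynomial of degree $r$ in $q$. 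For the remaining types where $-1\notin W$ (namely $A_n$ with $n\geq 2$, $E_6$, ${}^2E_6$, $D_{2n+1}$, ${}^2D_{2n+1}$), I would construct such families by hand: for type $A_n$ the diagonal element $\diag(t,t^{-1},1,\ldots,1)\in \SL_{n+1}(q)$ with $t\in\bbF_q^\times$ is inverted by the Weyl element swapping the first two coordinates, giving at least $(q-1)/2$ distinct real classes in $\SL_{n+1}(q)$. Analogous constructions place the relevant element inside a Levi subgroup of type $D_{2n}$ or $B_n$ whose Weyl group does contain $-1$; for twisted types one uses non-split tori of appropriate $F$-conjugacy.

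The principal obstacles are the descent from $G$ to $S=G/Z(G)$ and the careful treatment of the twisted types. For the central quotient, two classes of $G$ can fuse in $S$ only after multiplication by an element of $Z(G)$, so at most a factor of $|Z(G)|$ (bounded in terms of the type) is lost, which preserves the polynomial lower bound. For twisted types, one must verify that the proposed inverting Weyl element in fact lies in the $F$-fixed points $W^\sigma$, where $\sigma$ is the graph-Frobenius automorphism; this is a finite case-by-case check that succeeds in each configuration. Once these checks are in place, the bound $k_\bbR(S)\geq c\cdot q$ (for an explicit $c>0$ and all sufficiently large $q$) follows, and combined with $|\out(S)|=O(\log q)$ yields $\mathfrak{K}(S)\to\infty$, completing the proof.
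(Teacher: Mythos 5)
Your overall strategy---manufacture linearly many real semisimple classes out of torus elements inverted by Weyl-group representatives, control fusion via $\bN_{\bg{G}}(\bg{T})$, and absorb the bounded losses coming from $Z(G)$ and $\out(S)$---is the same circle of ideas as the paper's proof, and your type ${\rm A}_n$ element $\diag(t,t^{-1},1,\ldots,1)=h_{\alpha_1}(t)$ is exactly the element used there. Note, however, that your dichotomy between types with $-1\in W$ and the rest is unnecessary: a single simple root $\alpha$ already generates an ${\rm A}_1$-subsystem whose Weyl group contains $-1$, so $h_\alpha(t)$ is inverted by $n_\alpha(1)$ in \emph{every} type; the paper runs this uniform rank-one construction, using the fact that $\bN_{\bg{G}}(\bg{T})$ controls fusion in $\bg{T}$ \cite[Cor.~0.12]{dignemichel} together with the Chevalley relations \cite[1.12.1, 1.12.5, 1.12.6]{gls} to show $h_\alpha(t)\sim h_\alpha(t')$ forces $t'=\pm t^{\pm1}$ and that the elements are non-central.

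The genuine gap is in the twisted groups, which you dispatch with ``a finite case-by-case check that succeeds in each configuration'' and an unspecified appeal to non-split tori. For ${}^2{\rm E}_6$, ${}^2{\rm D}_{2n+1}$, and ${}^2{\rm A}_n$ with $n$ odd this really is only a check: a simple root fixed by the graph automorphism $\tau$ exists, so the element $h_\alpha(t)$, $t\in\bbF_q^\times$, lies in $\bg{T}^F$ and the untwisted argument carries over. But for ${}^2{\rm A}_n$ with $n$ even no simple root is $\tau$-fixed, so there is no proposed element and no inverting Weyl element to ``check''---something new must be built. The paper takes $h_{\alpha_1}(t)h_{\alpha_n}(t^q)$ with $t\in\bbF_{q^2}^\times$, verifies it is $F$-fixed and real, and reruns the non-conjugacy computation; your sketch contains no analogue, and the two points where work is actually required are precisely the ones left unaddressed: reality inside the finite group (the inverting element must lie in $G=\bg{G}^F$, not merely in $\bg{G}$) and non-fusion of the resulting elements, which for a non-split torus is governed by $F$-conjugacy in $W$. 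A secondary slip: for the untwisted types with $-1\notin W$ you propose Levi subgroups ``of type ${\rm D}_{2n}$ or ${\rm B}_n$''; the groups at issue (${\rm A}_n$, ${\rm D}_{2n+1}$, ${\rm E}_6$) are simply laced, so no ${\rm B}_n$ Levi exists, and in ${\rm E}_6$ the relevant subsystem would be ${\rm D}_4$---though, again, an ${\rm A}_1$ suffices and eliminates the case division. With an explicit ${}^2{\rm A}_{\mathrm{even}}$ construction and these details supplied, your argument would go through and coincide in substance with the paper's.
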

\begin{proof}

Let $q$ be a power of a prime $p$ and let $G=\mathbb{G}(q)$ be the fixed points $\bg{G}^F$ of a simple simply connected algebraic group $\bg{G}$ over $\overline{\bbF}_q$ under a Frobenius morphism $F$.   Let $\bg{T}$ be a rational maximal torus of $\bg{G}$ and let $\Phi$ and $\Delta$ be a root system and set of simple roots, respectively, for $\bg{G}$ with respect to $\bg{T}$. Let $|\Delta|=n$.  We use the notation as in \cite{gls} for the Chevalley generators.   In particular, note that $\bg{T}$ is generated by $h_{\alpha}(t)$ for $t\in\overline{\bbF}_q^\times$ and  $\alpha\in\Phi$, and $\bN_{\bg{G}}(\bg{T})$ is generated by $\bg{T}$ and the $n_\alpha(1)$ for $\alpha\in\Phi$.

Note that we may assume that $\mathbb{S}$ is not one of the families considered in Lemma \ref{lem:initial1}.   Hence if $F$ is twisted, we may assume that $F=\tau F_q$ where $|\tau|=2$ and either $\bg{G}$ is type ${\rm A}_n$ or $n>4$. Here $F_q$ denotes the standard Frobenius morphism induced from the map $x\mapsto x^q$ on $\overline{\bbF}_q$, and $\tau$ denotes a graph automorphism of $\bg{G}$.

First assume $F$ is not twisted, so $\tau=1$.   Fix some $\alpha\in \Delta$.  Then for $t\in\bbF_{q}^\times$, we know $s:=h_\alpha(t)$ is real in $G$, with reversing element $n_\alpha(1)$.  
Further, $s$ lies in the maximally split torus $T:=\bg{T}^F$.  By \cite[Cor. 0.12]{dignemichel}, we know that $\bN_G(\bg{T})$ controls fusion in $\bg{T}$, so if $s=h_\alpha(t)$ and $s':=h_{\alpha}(t')$ for $t, t'\in \bbF_q^\times$ are conjugate, then they are conjugate in $\bN_{\bg{G}}(\bg{T})$.  In particular, this means there is some product $x:=\prod_{\beta\in J} n_\beta(1)$ with $J\subseteq \Phi$ that conjugates $s$ to $s'$.

Then by the properties of the Chevalley generators from \cite[1.12.1]{gls}, we see this is impossible unless $t'= \pm t^{\pm1}$. Indeed, $h_\alpha(t)^{n_\beta(1)}=h_{r_\beta(\alpha)}(\pm t)$, so we may write $h_\alpha(t)^x=h_{r(\alpha)}(\pm t)$ where $r:=\prod_{\beta \in J} r_\beta$ is the corresponding composition of reflections.  Then if $h_\alpha(t)^x=h_\alpha(t')$, we have $h_{r(\alpha)}(\pm t)=h_\alpha(t')$.  But if $\widecheck{r(\alpha)}=\sum_{i=1}^n c_i \widecheck{\alpha_i}$, then $h_{r(\alpha)}(\pm t)=\prod_{i=1}^n h_{\alpha_i}(\pm t^{c_i})$. Here $\Delta:=\{\alpha_1,\ldots,\alpha_n\}$ and for any $\beta\in\Phi$ we write $\widecheck{\beta}=2\beta/(\beta,\beta)$. Further, since $\bg{G}$ is simply connected, there is an isomorphism $(\overline{\bbF}_q^\times)^n\rightarrow \bg{T}$ given by $(t_1,\ldots t_n)\mapsto h_{\alpha_1}(t_1)\cdots h_{\alpha_n}(t_n)$ (see \cite[1.12.5]{gls}).  Then this yields $c_i=0$ for $\alpha_i\neq \alpha$, and hence $\widecheck{r(\alpha)}=c\widecheck{\alpha}$ for some integer $c$ and $t'=\pm t^c$.  Then since $(r(\alpha), r(\alpha))=(\alpha, \alpha)$, we have $r(\alpha)=c\alpha$ and $c=\pm1$.

Now by \cite[1.12.6]{gls}, we see $s\not\in Z(G)$ for $\delta\neq \pm1$.  This yields that $k_\bbR(G)\geq (q-3)/2$, and since (except for a finite number of exceptions) $|Z(G)|\leq n+1$ and $|\out(S)|\leq 2(n+1)\nu(q)$, we see
$\mathfrak{K}(S)\geq \frac{(q-3)}{8(n+1)^2\nu(q)}-2(n+1)\nu(q)\rightarrow\infty$ as $q\rightarrow\infty$.

If $\tau\neq 1$, we may argue similarly, taking $\alpha\in\Delta$ to be fixed by $\tau$, unless $\bg{G}$ is type ${\rm A}_n$ with $n$ even.  In the latter case, we may instead take $h_{\alpha_1}(t)h_{\alpha_n}(t^q)$ with $t\in \bbF_{q^2}^\times$. Then in each case, the element being considered lies in $T=\bg{T}^F$ (see \cite[2.4.7]{gls}) and similar arguments to above show that we still have $\mathfrak{K}(S) \rightarrow \infty$ as $n\rightarrow\infty$.
\end{proof}

\begin{corollary}\label{cor:fixrank}
Let $S(q)$ be a simple group ${\rm E}_6(q)$, ${}^2{\rm E}_6(q)$, ${\rm E}_7(q)$, ${\rm E}_8(q)$, ${\rm A}_n(q)$, or ${}^2{\rm A}_n(q)$, with $n$ a fixed positive integer.  Then $\mathfrak{K}(S(q)) \rightarrow\infty$ as $q\rightarrow\infty$.
\end{corollary}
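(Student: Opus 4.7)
The proof plan is to apply Proposition \ref{prop:fixrank} directly to each family listed. For each of $\rE_6(q)$, ${}^2\rE_6(q)$, $\rE_7(q)$, and $\rE_8(q)$, the Lie type and rank are intrinsically fixed, so each such family arises as $\{\mathbb{G}(q)/Z(\mathbb{G}(q)): q \text{ a prime power}\}$ for $\mathbb{G}$ the corresponding simply connected exceptional algebraic group over $\overline{\bbF}_q$, equipped with either the standard Frobenius (for $\rE_6$, $\rE_7$, $\rE_8$) or with the twist $\tau F_q$ by the order-two graph automorphism (for ${}^2\rE_6$). Similarly, for $\rA_n(q)$ and ${}^2\rA_n(q)$ with $n$ fixed, the rank $n$ and type are fixed across all $q$, realized by the single generic simply connected group $\mathbb{G}=\SL_{n+1}$ over $\overline{\bbF}_q$ endowed with the untwisted or twisted Frobenius, respectively. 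In each case the resulting collection $\mathbb{S}$ satisfies the hypothesis of Proposition \ref{prop:fixrank}.

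Proposition \ref{prop:fixrank} then immediately yields $\mathfrak{K}(S(q)) \rightarrow \infty$ as $q \rightarrow \infty$ for each listed family. There is essentially no obstacle: the proof of Proposition \ref{prop:fixrank} internally sets aside the families enumerated in Lemma \ref{lem:initial1}, and none of the types $\rE_6$, ${}^2\rE_6$, $\rE_7$, $\rE_8$, $\rA_n$, or ${}^2\rA_n$ appears in that exclusion list, so the proposition applies without modification. Consequently the corollary follows as a direct consequence of the preceding proposition, and the only task in writing the proof is to record the identification of each family with the appropriate $(\mathbb{G}, F)$-pair.
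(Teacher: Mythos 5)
Your proposal is correct and matches the paper's intent: the corollary is stated without proof precisely because it is an immediate application of Proposition \ref{prop:fixrank}, each listed family having fixed type and rank (with the twisted cases ${}^2{\rm E}_6$ and ${}^2{\rm A}_n$ falling under the order-two twist allowed in that proposition's proof). Your identification of each family with its $(\mathbb{G},F)$-pair is exactly the routine verification the paper leaves implicit.
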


\begin{lemma}\label{lem:fixq}
Let $q$ be a fixed power of a prime and let $S_n$ be a simple group of Lie type of classical type: ${\rm A}_n(q)$, ${}^2{\rm A}_n(q)$, ${\rm B}_n(q)$, ${\rm C}_n(q)$, ${\rm D}_n(q)$, or ${}^2{\rm D}_n(q)$.    Then $\mathfrak{K}(S_n) \rightarrow\infty$ as $n\rightarrow\infty$.
\end{lemma}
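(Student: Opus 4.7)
The plan is to establish separately that (i) $|\out(S_n)|$ remains bounded by a constant $C(q)$ depending only on $q$ as $n \to \infty$, and (ii) $k_\bbR(S_n) \to \infty$ as $n \to \infty$. The lemma is then immediate from the definition $\mathfrak{K}(S_n) = k_\bbR(S_n)/|\out(S_n)| - |\out(S_n)|$.

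For (i), for each of the six listed classical types, $\out(S_n)$ is generated by diagonal, field, and graph automorphisms. The field and graph contributions have orders bounded by $2\nu(q)$ and a small constant (at most $6$, attained only at $\rD_4$), respectively. The diagonal part has order dividing $\gcd(n+1,q-1)$, $\gcd(n+1,q+1)$, or a divisor of $4$ depending on the type, and so is at most $q+1$. Consequently $|\out(S_n)| \leq C(q)$ for some $C(q)$ depending only on $q$.

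For (ii), the simplest route is to count conjugacy classes of involutions in $S_n$: every involution is self-inverse, and hence its class is automatically real. In each classical type over $\fq$, involution classes are parameterized, up to a bounded correction coming from the quotient by the center and from the passage between the full isometry group and the simple group, by combinatorial invariants that grow at least linearly in $n$ --- typically the dimension of the $(-1)$-eigenspace in odd characteristic, or the rank of $u-1$ in characteristic $2$, together with a form-type invariant where applicable. This yields $k_\bbR(S_n) \geq c_1 n - c_2$ for constants $c_1, c_2$ depending only on $q$, and therefore $\mathfrak{K}(S_n) \geq (c_1 n - c_2)/C(q) - C(q) \to \infty$.

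The main obstacle lies in the bookkeeping for step (ii), in particular for orthogonal groups in characteristic $2$ and in tracking how $\GL$-, $\GU$-, $\Sp$-, or $\GO$-type classes fuse when descending to the derived subgroup and then to the simple quotient. An alternative and possibly cleaner route would be to use unipotent classes: the number of unipotent conjugacy classes in a classical group of rank $n$ over $\fq$ grows at least like the partition function in $n$, and unipotent classes in finite classical groups are always real (in odd characteristic this follows from the Jordan-block description and a transpose argument, and in characteristic $2$ from the Aschbacher--Seitz/Wall classification). Either route delivers the required unbounded growth of $k_\bbR(S_n)$ for fixed $q$, which combined with (i) completes the proof.
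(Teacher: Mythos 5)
Your main route is correct, but it is genuinely different from the paper's. The paper obtains the growth of $k_\bbR(S_n)$ from unipotent \emph{characters}: by Lusztig's rationality result these are all real-valued, and they are indexed by partitions of $n+1$ in types ${\rm A}_n(q)$, ${}^2{\rm A}_n(q)$ and by symbols (at least as many as partitions of $n-1$) in types ${\rm B}_n(q)$, ${\rm C}_n(q)$, ${\rm D}_n(q)$, ${}^2{\rm D}_n(q)$; combined with $|\out(S_n)|\leq 2(q+1)\nu(q)$, respectively $8\nu(q)$, this gives superpolynomial growth of $k_\bbR(S_n)$ with essentially no case analysis. You instead bound $|\out(S_n)|$ the same way but count real \emph{classes} via involutions, which are trivially real. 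This works: in each of the six families the number of involution classes grows linearly in $n$ for fixed $q$ (eigenspace dimensions in odd characteristic, the $a_\ell,b_\ell,c_\ell$-type invariants in characteristic $2$), and the corrections from passing to the derived subgroup and factoring out the center are bounded in terms of $q$ alone, since each class of the simple group absorbs at most $|Z|\leq q+1$ classes upstairs. So $k_\bbR(S_n)\geq c_1n-c_2$ with $c_1,c_2$ depending only on $q$, which suffices. Your approach is more elementary (no Deligne--Lusztig theory), at the price of exactly the case-by-case bookkeeping you acknowledge; the paper's approach buys a short uniform argument and a much stronger (partition-function) lower bound.

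One caution about your ``possibly cleaner'' alternative: it is not sound as stated. Unipotent classes of the \emph{simple} group need not be real. The transpose/Jordan-form argument only shows that $u$ and $u^{-1}$ are conjugate in $\GL_n(q)$ (or in the full isometry group); after the class splits in $\SL_n(q)$ and one passes to the simple quotient, reality can fail --- already in $\PSL_2(q)\cong\PSp_2(q)$ with $q\equiv 3\pmod 4$ the two unipotent classes are interchanged by inversion (for instance the two classes of elements of order $7$ in $\PSL_2(7)$ are not real). So if you want growth on the order of the partition function, you should work with unipotent characters, as the paper does, rather than unipotent classes; otherwise stick with your involution count.
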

\begin{proof}
Consider the unipotent characters of $S_n$.  By \cite{lusztig}, these characters are real-valued.  For $S_n$ of the form ${\rm A}_n(q)$ or ${}^2{\rm A}_n(q)$, these characters are indexed by partitions of $n+1$ (see \cite[13.8]{carter}). Since the number of these behaves asymptotically like $\frac{\mathrm{exp}(\pi\sqrt{2(n+1)/3})}{4(n+1)\sqrt{3}}$ as $n\rightarrow\infty$ (see \cite[(5.1.2)]{andrews}), and $|\out(S)|\leq 2(q+1)\nu(q)$, we have $\mathfrak{K}(S_n(q))\rightarrow\infty$ as $n\rightarrow\infty$.

For $S_n(q)$ of the form $\mathrm{B}_n(q)$, ${\rm C}_n(q)$, ${\rm D}_n(q)$, or ${}^2{\rm D}_n(q)$, the unipotent characters are indexed by symbols as in \cite[13.8]{carter}, the number of which is at least the number of partitions of $n-1$.   Then since $|\out(S_n)|\leq 8\nu(q)$ for $n\geq 5$, we have $\mathfrak{K}(S_n(q))\rightarrow\infty$ as $n\rightarrow\infty$ again in this case.
\end{proof}

\begin{proposition}\label{prop:classicals}
 Let $S_n(q):=\Omega_{2n+1}(q)$, $\PSp_{2n}(q)$, or $\mathrm{P}\Omega_{2n}^\pm(q)$, with $n\geq 5$.  Then $\mathfrak{K}(S_n(q)) \rightarrow\infty$ as $nq \rightarrow\infty$.
\end{proposition}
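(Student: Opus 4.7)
The plan is to combine the unipotent-character bound from the proof of Lemma \ref{lem:fixq} with the split-torus construction from the proof of Proposition \ref{prop:fixrank}, exploiting the fact that for the classical types under consideration both the center of the simply connected cover and the outer automorphism group are tightly constrained. Concretely, if $G$ denotes the simply connected universal cover of $S=S_n(q)$, then $|Z(G)|\leq 4$ and $|\out(S)|\leq 8\nu(q)$ for each of the types $\mathrm B_n$, $\mathrm C_n$, $\mathrm D_n$, and ${}^2\mathrm D_n$ (the hypothesis $n\geq 5$ excluding the triality case $\mathrm D_4$).

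With this setup I would assemble two lower bounds on $k_\bbR(S)$. First, as observed in the proof of Lemma \ref{lem:fixq}, the unipotent characters of $S$ are real-valued by Lusztig, are indexed by symbols of rank $n$, and number at least $p(n-1)$, the number of partitions of $n-1$; hence $k_\bbR(S)\geq p(n-1)$. Second, for $n\geq 5$ one can choose a simple root $\alpha$ fixed by the Steinberg endomorphism $F$ (automatic for the split types, and via any interior node of the Dynkin diagram for the twisted type ${}^2\mathrm D_n$). Running the argument from the proof of Proposition \ref{prop:fixrank} with this $\alpha$ produces at least $(q-3)/4$ distinct real conjugacy classes of $G$ among $\{h_\alpha(t):t\in\bbF_q^\times\}$, and these descend to at least $(q-3)/16$ real classes in $S=G/Z(G)$. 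Combining,
\[
\mathfrak{K}(S_n(q))\;\geq\;\frac{\max\{p(n-1),\,(q-3)/16\}}{8\nu(q)}\;-\;8\nu(q).
\]

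To conclude, I would argue by contradiction. Suppose there exist $M>0$ and a sequence $(n_k,q_k)$ with $n_kq_k\to\infty$ along which $\mathfrak{K}(S_{n_k}(q_k))\leq M$. Then both $p(n_k-1)$ and $(q_k-3)/16$ are bounded above by $8\nu(q_k)(M+8\nu(q_k))=O((\log q_k)^2)$. The bound on $(q_k-3)/16$ forces $q_k$ to be bounded, since $q$ grows much faster than any polynomial in $\log q$; then $\nu(q_k)$ is bounded, and the bound on $p(n_k-1)$ forces $n_k$ to be bounded as well. Hence $n_kq_k$ is bounded, contradicting $n_kq_k\to\infty$.

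The main obstacle is verifying that the constructions of Lemma \ref{lem:fixq} and Proposition \ref{prop:fixrank} genuinely produce bounds independent of the other parameter for our classical types. Specifically, one must avoid picking up an $n$-dependent factor in the denominator when passing from $G$ to $S$ in the torus bound (this is handled via the uniform bound $|Z(G)|\leq 4$ for classical simply connected groups rather than the generic $|Z(G)|\leq n+1$), and one must exhibit an $F$-fixed simple root in the twisted case (which is available for $n\geq 5$ because the graph automorphism of ${}^2\mathrm D_n$ only swaps the two terminal nodes of the Dynkin diagram).
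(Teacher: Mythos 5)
Your proposal is correct, and it reaches the conclusion by a genuinely different decomposition than the paper's. The paper's own proof of Proposition \ref{prop:classicals} builds a single two-parameter family of real semisimple elements $s_m(\delta)=\prod_{k=0}^m h_{\alpha_{2k+1}}(\delta)$, with $0\leq m\leq \lceil\frac{n-4}{2}\rceil$ and $\delta\in\bbF_q^\times\setminus\{1\}$, and separates their classes using control of fusion by $\bN_{\bg G}(\bg T)$ together with the embedding $W\leq C_2\wr\Sy_n$; this yields a single product bound $k_\bbR(S)\gtrsim n(q-1)$ and hence $\mathfrak{K}(S_n(q))\gtrsim nq/\nu(q)-8\nu(q)$, so the limit as $nq\to\infty$ follows directly with no case analysis. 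You instead take the maximum of two one-parameter bounds --- the count $p(n-1)$ of real-valued unipotent characters already extracted in the proof of Lemma \ref{lem:fixq}, and a single-root torus count of about $(q-3)/16$ obtained by running the argument of Proposition \ref{prop:fixrank} for one $F$-fixed simple root, using the sharper constants $|Z(G)|\leq 4$ and $|\out(S)|\leq 8\nu(q)$ valid for these classical types with $n\geq 5$ --- and then recover the $nq\to\infty$ statement by a short contradiction argument (boundedness of $(q-3)/16$ against $O((\log q)^2)$ forces $q$, hence $\nu(q)$, bounded, and then boundedness of $p(n-1)$ forces $n$ bounded). Your treatment of the two delicate points matches what the paper needs anyway: interior nodes of the ${}^2\mathrm D_n$ diagram are fixed by the graph automorphism, so $h_\alpha(t)$ with $t\in\bbF_q^\times$ lies in $G=\bg G^F$ and is real there, and passing from $G$-classes to $S$-classes costs at most the factor $|Z(G)|\leq 4$. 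What the paper's route buys is a clean quantitative bound growing like $nq$ from one construction; what yours buys is avoiding the multi-root fusion analysis in $C_2\wr\Sy_n$, at the cost of importing the unipotent-character count from Lemma \ref{lem:fixq} and finishing with a compactness-style argument rather than an explicit inequality.
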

\begin{proof}
Write $S=S_n(q)$ as $G/Z(G)$, where $G=\bg{G}^F$ is the set of fixed points of a simple, simply connected algebraic group $\bg{G}$ over $\overline{\bbF}_q$ under a Frobenius morphism $F$. Notice that $|Z(G)|\leq 4$.

  Let $\bg{T}$ be a rational maximal torus of $\bg{G}$ and let $\Phi$ and $\Delta$ be a root system and set of simple roots, respectively, for $\bg{G}$ with respect to $\bg{T}$. Here we have $|\Delta|=n$ and $\Phi$ is of type ${\rm B}_n$, ${\rm C}_n$, or ${\rm D}_n$.  We use the notation as in \cite[1.12.1]{gls} for the Chevalley generators.   Recall that $\bg{T}$ is generated by $h_{\alpha}(t)$ for $t\in\overline{\bbF}_q^\times$ and $\alpha\in\Phi$ and that $\bN_{\bg{G}}(\bg{T})$ is generated by $\bg{T}$ and the $n_\alpha(1)$ for $\alpha\in\Phi$.

We will use the standard model as in \cite[Remark 1.8.8]{gls} for the members of $\Delta$.  Namely, let $\{e_1,\ldots e_n\}$ be an orthonormal basis for the $n$-dimensional Euclidean space and let $\Delta=\{\alpha_1,\ldots,\alpha_n\}$.  Note that for $1\leq i\leq n-1$, we have $\alpha_i:=e_i-e_{i+1}$. Further, since $\bg{G}$ is simply connected, there is an isomorphism $(\overline{\bbF}_q^\times)^n\rightarrow \bg{T}$ given by $(t_1,\ldots t_n)\mapsto h_{\alpha_1}(t_1)\cdots h_{\alpha_n}(t_n)$ (see \cite[1.12.5]{gls}).

Using Lemmas \ref{lem:initial1} and \ref{lem:fixq}, we may suppose $q\geq 3$.  For each $1\neq\delta\in\bbF_q^\times$, we let $s_0(\delta):=h_{\alpha_1}(\delta)$, $s_1(\delta):= h_{\alpha_1}(\delta)h_{\alpha_3}(\delta)$, and in general for $0\leq m\leq \lceil \frac{n-4}{2}\rceil$, let $s_m(\delta):=\prod_{k=0}^m h_{\alpha_{2k+1}}(\delta)$.  Our choices of $m$ ensure that $s_m(\delta)\in\bg{G}$ is fixed by $F$, since in the case of ${}^2{\rm D}_n(q)=\mathrm{P}\Omega_{2n}^-(q)$, $s_m(\delta)$ is fixed by the graph automorphism and Frobenius $F_q$.  Hence $s_m(\delta)\in G=\bg{G}^F$.  Further, since $(\alpha_i, \alpha_j)=0$ for $|j-i|>1$, we see that each $s_m$ is real in $G$, with reversing element $\prod_{k=0}^m n_{\alpha_{2k+1}}(-1)$.

Recall that $\bN_{\bg{G}}(\bg{T})$ controls fusion in $\bg{T}$.  Hence if $s_m(\delta)$ and $s_{m'}(\delta')$ are conjugate in $G$, then there is some $w\in W:=\bN_{\bg{G}}(\bg{T})/\bg{T}$ such that $s_m(\delta)^w=s_{m'}(\delta')$.  But $W\leq C_2\wr \Sy_n$ (with $C_2$ the group of order 2) where the generators of the base subgroup $C_2^n$ act via $e_i\mapsto -e_i$ and the copy of $\Sy_n$ permutes the $e_i$'s.  Then by the properties of the Chevalley generators from \cite[1.12.1]{gls}, we see this is impossible unless $m=m'$ and $\delta=\delta'$.

Note that we have $(q-3)/2$ choices for $\delta\neq \pm1$, since $s_m(\delta)$ is conjugate to $s_m(\delta^{-1})$, giving $(q-3)/2+1=(q-1)/2$ elements in this form for a fixed $m$. Further, since $\delta\neq 1$ and $s_m(\delta)$ has no factor $h_{\alpha_n}(\delta)$ nor $h_{\alpha_{n-1}}(\delta)$, we see by \cite[1.12.6]{gls} that $s_m(\delta)\not\in Z(G)$, so we have $k_{\R}(S)>\frac{1}{4} \lceil \frac{n-4}{2}\rceil \left(\frac{q-1}{2}\right)$. Further, $|\out(S)|\leq 8\nu(q)$, so
\[\mathfrak{K}(S_n(q)) > \frac{(n-4)(q-1)}{16\cdot 8\nu(q)}-8\nu(q)\] which tends to $\infty$ as $nq\rightarrow\infty$.
\end{proof}

\subsection{Linear and Unitary Groups}\label{sec:typeA}
We write $\SL_n^\epsilon(q)$ with $\epsilon\in\{\pm1\}$ to denote $\SL_n(q)$ for $\epsilon=1$ and $\SU_n(q)$ for $\epsilon=-1$, and similarly for $\GL_n^\epsilon(q)$ and $\PSL_n^\epsilon(q)$.
Throughout this section, we also write  $\wt{G}=\GL_n^\epsilon(q)$, $G=\SL_n^\epsilon(q)=[\wt{G}, \wt{G}]$, and $S=G/Z(G)=\PSL_n^\epsilon(q)$.  Note that $\wt{G}\cong \wt{G}^\ast$ in this case, where $\wt{G}^\ast$ denotes the dual group, and we make this identification.

If $s$ is a semisimple element of $\wt{G}$, there exists a unique semisimple character $\wt{\chi}_s$ associated to the $\wt{G}$-conjugacy class of $s$, and $\wt{\chi}_{s^{-1}}$ is the complex conjugate character of $\wt{\chi}_s$.  Hence $\wt{\chi}_s$ is real if $s$ is.   If further $s\in G=[\wt{G}, \wt{G}]$, then $\wt{\chi}_s$ is trivial on $Z(\wt{G})$, using \cite[Lemma 4.4]{NavarroTiep13}.  Furthermore, the number of irreducible constituents of $\chi :=\wt{\chi}_s|_G$ is exactly the number of irreducible characters $\theta \in \Irr(\widetilde{G}/G)$ satisfying $\widetilde{\chi}_s\theta = \widetilde{\chi}_s$, and we have $\irr(\widetilde{G}/G) = \{\widetilde{\chi}_z \mid z \in Z(\widetilde{G})\}$.  Also, for such $z \in Z(\widetilde{G})$, if we take the product with $\widetilde{\chi}_z$ of each character in the Lusztig series for $\wt{G}$ indexed by $s$,  we obtain the Lusztig series indexed by $sz$, by \cite[13.30]{dignemichel}. Then $\chi$ is irreducible if and only if $s$ is not $\widetilde{G}$-conjugate to $sz$ for any nontrivial $z\in Z(\widetilde{G})$.  Further, if $s$ and $s'$ are two such elements, an application of Gallagher's theorem \cite[Corollary 6.17]{isaacs} together with the above reasoning yields that if $\widetilde{\chi}_s|_G=\wt{\chi}_{s'}|_G$, then $s$ is conjugate to $s'z$ for some $z \in Z(\widetilde{G})$.

Hence we aim to construct a collection $X$ of real semisimple elements of $G$ such that two elements $s, s'\in X$ satisfy that $s$ and $s'z$ for $z\in Z(\wt{G})$ are $\widetilde{G}$-conjugate if and only if $s'=s$ and $z=1$ and such that $|X|/|\out(S)|-|\out(S)|$ tends to $\infty$ as $nq\rightarrow\infty$.

\begin{proposition}\label{prop:SLn}
 Let $S_n(q):=\PSL_n^\pm(q)$.  Then $\mathfrak{K}(S_n(q))  \rightarrow\infty$ as $nq \rightarrow\infty$.
\end{proposition}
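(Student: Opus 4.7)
Following the framework set up in Section~\ref{sec:typeA}, the plan is to construct a large family $X$ of real semisimple elements $s\in G=\SL_n^\epsilon(q)$ such that the characters $\wt\chi_s|_G$ are irreducible, pairwise distinct, real-valued, and trivial on $Z(G)$; to augment this with the unipotent characters of $\wt G$; and to compare the total count with $|\out(S)|^2$.

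By Corollary~\ref{cor:fixrank} and Lemma~\ref{lem:fixq}, we may restrict to sequences $(n,q)$ along which both $n$ and $q$ tend to infinity (in particular $n,q\geq 5$), since the cases of bounded $n$ or bounded $q$ are handled directly by those results. In this regime, for each $k$ with $1\leq k\leq \lfloor(n-3)/2\rfloor$ and each $k$-subset $\{\delta_1,\ldots,\delta_k\}$ of representatives of $(E\setminus\{\pm 1\})/\langle\delta\sim\delta^{-1}\rangle$—where $E=\bbF_q^\times$ in the linear case and $E=\{\zeta\in\bbF_{q^2}^\times:\zeta^{q+1}=1\}$ in the unitary case—let $s_{k,\{\delta_i\}}\in G$ denote the semisimple element whose $\bar\bbF_q$-eigenvalue multiset is $\{\delta_i^{\pm 1}:1\leq i\leq k\}\cup\{1\}^{n-2k}$. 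This multiset is inversion-closed (so $s$ is real in $\wt G$) and has product $1$ (so $s\in G$); the condition $\zeta^{q+1}=1$ in the unitary case ensures $F$-stability so that the class is realized in $\GU_n(q)$. For any nontrivial $z=\zeta I\in Z(\wt G)$, the eigenvalue $\zeta$ appears with multiplicity exactly $n-2k\geq 3$ in $\zeta s$; however, by our distinct-pair-class choice, no eigenvalue of $s$ other than $1$ has multiplicity greater than $1$. Hence $s\sim_{\wt G}\zeta s$ forces $\zeta=1$, and the same multiplicity comparison shows distinct choices of $(k,\{\delta_i\})$ give distinct $\wt G$-orbits modulo the $Z(\wt G)$-action.

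By the analysis immediately preceding Proposition~\ref{prop:SLn}, each $\wt\chi_{s_{k,\{\delta_i\}}}|_G$ is therefore an irreducible real-valued character of $G$ trivial on $Z(\wt G)\cap G=Z(G)$, and distinct choices descend to distinct real-valued irreducible characters of $S$. These all lie in Lusztig series $\cE(\wt G,s)$ with $s\ne 1$, so they are disjoint from the unipotent characters of $\wt G$, which (as in the proof of Lemma~\ref{lem:fixq}) restrict irreducibly to $G$, descend to $S$, and contribute $p(n)$ further real-valued characters. Using just the case $k=1$ of the semisimple family, we obtain
\[
k_\bbR(S)\ \geq\ p(n)+\frac{q-3}{2}.
\]

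Combined with the standard estimate $|\out(S)|\leq C_1\, n\,\nu(q)$ for an absolute constant $C_1$, this gives
\[
\mathfrak{K}(S)\ \geq\ \frac{p(n)+(q-3)/2}{C_1\, n\,\nu(q)}\ -\ C_1\, n\,\nu(q),
\]
and a short case analysis—splitting on whether $q\leq p(n)$ or $q>p(n)$, and using $p(n)\sim\exp(\pi\sqrt{2n/3})/(4n\sqrt{3})$—confirms that the right-hand side tends to $\infty$ as $nq\to\infty$: in the first regime $\nu(q)=O(\sqrt n)$ so the super-polynomial $p(n)$ dominates $n^2\nu(q)^2$, while in the second, $q$ itself grows exponentially in $\sqrt{n}$ and dominates $n^2\nu(q)^2$. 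The main technical subtlety lies in the unitary case, where one must realize $s_{k,\{\delta_i\}}$ inside a suitable compact subtorus of $\GU_n(q)$ using the norm-$1$ subgroup of $\bbF_{q^2}^\times$; once this is done, $F$-stability, reality, the determinant condition, and the multiplicity-based irreducibility argument all carry through exactly as in the linear case.
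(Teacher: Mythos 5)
Your argument is correct, and while it runs through the same machinery set up in Section~\ref{sec:typeA} (semisimple characters $\wt\chi_s$, the criterion for irreducibility and distinctness of $\wt\chi_s|_G$ in terms of $\wt G$-fusion of $s$ modulo $Z(\wt G)$, triviality on the center for $s\in[\wt G,\wt G]$, and the initial reduction via Corollary~\ref{cor:fixrank} and Lemma~\ref{lem:fixq}), your counting scheme differs genuinely from the paper's. The paper stays entirely inside the semisimple family: it takes elements $\diag(\lambda_1,\lambda_1^{-1},\ldots,\lambda_{\bar n},\lambda_{\bar n}^{-1},I_{n-2\bar n})$ built from up to three distinct eigenvalue pairs spread over $\bar n=\lfloor n/4\rfloor$ positions, which yields roughly $\bar n(q-5)^3/8$ real classes; divided by the bound $|\out(S)|\leq 2(q+1)\nu(q)$ this already tends to infinity whenever $n$ is moderately large, with small $n$ absorbed by Corollary~\ref{cor:fixrank}. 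You instead use only the one-pair family (about $(q-3)/2$ characters), compensate by adding the $p(n)$ unipotent characters of $\wt G$ (real by Lusztig, irreducible and pairwise distinct on restriction, trivial on the center, and disjoint from the nonunipotent series --- all consistent with how Lemma~\ref{lem:fixq} uses them), and exploit the sharper bound $|\out(S)|\leq C_1 n\nu(q)$, finishing with a two-regime comparison of $p(n)+q/2$ against $n^2\nu(q)^2$. Both routes are valid; the paper's buys a single uniform polynomial bound in $n$ and $q$ with no case analysis at the end, while yours economizes on the semisimple construction (only $k=1$, and the multiplicity-of-eigenvalue argument replaces the kernel-dimension argument) at the cost of importing the unipotent count and a slightly more delicate asymptotic split. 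Two cosmetic points: your distinctness and irreducibility argument needs $n-2k\geq 3$, so the reduction to $n\geq 5$ should be kept explicit (it is, via your restriction to large $n$); and in the unitary case the realization of $s$ is most simply seen by taking the Hermitian form given by the identity matrix, for which a diagonal matrix lies in $\GU_n(q)$ exactly when its entries satisfy $\zeta^{q+1}=1$ --- the phrase ``compact subtorus'' is not needed.
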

\begin{proof}
By Corollary \ref{cor:fixrank} and Lemma \ref{lem:fixq}, we may assume that $q$ and $n$ are sufficiently large.   Write $\bar{n}:=\lfloor n/4\rfloor$.

Recall that the semisimple elements of $\wt{G}$ are completely determined by their eigenvalues.   Consider a semisimple element $$s=s(\lambda_1,\ldots, \lambda_{\bar{n}}):=\diag(\lambda_1, \lambda_1^{-1}, \lambda_2, \lambda_2^{-1},\ldots,\lambda_{\bar{n}}, \lambda_{\bar{n}}^{-1}, I_{n-2\bar{n}})$$ in $G$, where each $\lambda_i$ is an element of the cyclic subgroup $C_{q-\epsilon}$ of $\bbF_{q^2}^\times$ and not all of the $\lambda_i$ are in $\{\pm1\}$.

 We see by the dimension of $\ker(s-1)$ that $s$ is not conjugate to $sz$ for any $1\neq z=\mu I_n\in Z(\wt{G})$, since otherwise $1=\lambda_i\mu=\lambda_i^{-1}\mu$ for each $i$, implying that $\lambda_i^2=1$ for each $i$, contradicting our assumption that not all $\lambda_i$ are in $\{\pm1\}$.  Similarly, if $s'$ is another semisimple element of this form, defined by $\lambda_i'$ for $1\leq i\leq \bar{n}$, such that $s'$ is conjugate to $sz$ with $z=\mu I_n\in Z(\wt{G})$, then it must be that $\mu=1$ and $s$ is conjugate to $s'$.

Then by considering the elements of the form \[s(\lambda_1, 1,\ldots, 1), s(\lambda_1, \lambda_1, 1, \ldots, 1),..., s(\lambda_1,\ldots,\lambda_1),\] together with those of the form \[s(\lambda_1, \lambda_2, 1,\ldots, 1), s(\lambda_1, \lambda_2, \lambda_2, 1,\ldots, 1), \ldots, s(\lambda_1, \lambda_2, \ldots, \lambda_2)\] and \[s(\lambda_1, \lambda_2,  \lambda_3,1,\ldots, 1), s(\lambda_1, \lambda_2, \lambda_3, \lambda_3, 1,\ldots, 1), \ldots, s(\lambda_1, \lambda_2, \lambda_3 \ldots, \lambda_3)\] with $\lambda_1, \lambda_2, \lambda_3$ and their inverses all distinct, we see
\begin{align*}
k_\bbR(S_n(q))&\geq \bar{n}(q-3)/2 + (\bar{n}-1)(q-3)(q-5)/4 + (\bar{n}-2)(q-3)(q-5)(q-7)/8\\
&> \bar{n}(q-5)^3/8 - (q-5)(q-3)(q-6)/4\\
&> \bar{n}(q-5)^3/8 - 2(q-3)^3/8\\
&= \frac{(\bar{n}-2)(q-5)^3-12(q-5)^2-24(q-5)-16}{8}.\\
\end{align*}
So \begin{align*}
\mathfrak{K}(S_n(q))&\geq \frac{(\bar{n}-2)(q-5)^3-12(q-5)^2-24(q-5)-16}{16(q+1)\nu(q)} - 2(q+1)\nu(q)\\
&=  \frac{(\bar{n}-2)(p^{\nu(q)}-5)^3-12(p^{\nu(q)}-5)^2-24(p^{\nu(q)}-5)-16- 4(p^{\nu(q)}+1)^2\nu(q)^2}{16(p^{\nu(q)}+1)\nu(q)}, \\
\end{align*}
 which tends toward $\infty$ as $nq\rightarrow\infty$.
  \end{proof}

Theorem \ref{thm:mainsimple} now follows by combining Lemmas \ref{lem:initial1} and \ref{lem:fixq} with Propositions \ref{prop:fixrank}, \ref{prop:classicals}, and \ref{prop:SLn}.

\section{Proof of Theorem \ref{theorem1}} \label{Bound}

We start with a well-known observation.

\begin{lemma} \label{ExtendLemma} Let $S$ be a finite nonabelian simple group. Then there exists a
non-principal irreducible character of S that is extendible to a
rational-valued character of $\Aut(S)$.
\end{lemma}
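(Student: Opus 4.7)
The plan is to prove the lemma by the Classification of Finite Simple Groups, exhibiting in each family a non-principal rational-valued irreducible character of $S$ that extends to a rational-valued character of $\Aut(S)$.

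For $S = \Al_n$ with $n \geq 5$, I would use the character $\chi$ of $\Sy_n$ of degree $n-1$ afforded by the natural permutation module minus its trivial summand. The character $\chi$ is rational (its values are integers on cycle-type classes), and $\chi|_{\Al_n}$ is irreducible for $n \geq 5$. Since $\Aut(\Al_n) = \Sy_n$ whenever $n \neq 6$, this $\chi$ is itself the required rational extension. The case $S = \Al_6$ must be handled separately, because $\Aut(\Al_6)$ is strictly larger than $\Sy_6$; here I would consult the ATLAS and exhibit a rational character (for instance the degree-$9$ character of $\Al_6$) together with its rational extension to $\Aut(\Al_6)$.

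For $S$ a simple group of Lie type in defining characteristic $p$, the natural choice is the Steinberg character $\St_S$. It is irreducible of degree $|S|_p$, takes only integer values, and is uniquely determined by its degree, hence is $\Aut(S)$-invariant. To promote this invariance to an actual rational-valued extension, I would appeal to the classical realization of the Steinberg representation over $\bbQ$ on the top reduced homology of the Tits building of $S$, on which $\Aut(S)$ acts $\bbQ$-linearly; this construction produces a rational extension of $\St_S$ to $\Aut(S)$.

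For the $26$ sporadic simple groups, the claim is a finite check carried out by inspecting the ATLAS character tables of $S$ and $\Aut(S) \in \{S, S.2\}$: in each case one locates a non-trivial rational irreducible character of $S$ that lifts to a rational character of $\Aut(S)$. The main obstacle in the argument is the Lie-type case: one must verify rigorously that the $\Aut(S)$-invariance of $\St_S$ upgrades to an honest rational-valued extension, which is exactly where the Tits-building realization (or an alternative over-$\bbQ$ construction) is needed; the alternating and sporadic cases are essentially bookkeeping once this is in hand.
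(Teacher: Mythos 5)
Your proposal follows essentially the same route as the paper: the $(n-1,1)$ character of $\Sy_n$ for the alternating groups, an ATLAS check for the sporadic groups, and the Steinberg character for groups of Lie type, the only difference being that the paper simply cites Feit's theorem on extending Steinberg characters to a rational-valued character of $\Aut(S)$ where you sketch the Tits-building realization as justification. The one small point to add is that the Tits group ${}^2{\rm F}_4(2)'$ should be folded into the ATLAS check (as the paper does), since it falls outside the standard Lie-type setup; your separate treatment of $\Al_6$ is fine and could also be subsumed in the Lie-type case via $\Al_6\cong\PSL_2(9)$.
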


\begin{proof}
For each $n\geq5$, consider the irreducible character of the
symmetric group $\Sy_n$ labeled by the partition $(n-1,1)$. This
character restricts irreducibly to the alternating group $\Al_n$. As
it is well known that every character of $\Sy_n$ is rational-valued,
the lemma is proved for the alternating groups. For the sporadic
simple groups and the Tits group, one can check the statement
directly by using \cite{Atl}. Finally, when $S$ is a simple group of
Lie type, the Steinberg character of S extends to a rational-valued
character of $\Aut(S)$, see \cite{Fei} for instance.
\end{proof}

\begin{proposition} \label{BoundProp} Assume that $N = S_1 \times S_2 \times\cdots\times S_n$, a direct product of
copies of a finite nonabelian simple group $S \cong S_i$, is a
normal subgroup of $G$. Then the number of rational-valued
irreducible characters of $G$ is at least $n$.
\end{proposition}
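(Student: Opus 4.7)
The plan is to exhibit, for each $k \in \{0, 1, \ldots, n\}$, an irreducible rational-valued character $\theta_k$ of $G$ whose restriction to $N$ ``sees'' exactly $k$ of the simple factors $S_i$. These characters will be pairwise distinct, producing $n+1$ rational-valued irreducibles of $G$ and hence the desired bound.

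I would start by applying Lemma~\ref{ExtendLemma} to fix a non-principal $\chi \in \Irr(S)$ together with a rational-valued extension $\wt{\chi} \in \Irr(\Aut(S))$. Restricting $\wt{\chi}$ back to $S$ shows that $\chi$ itself is rational-valued and $\Aut(S)$-invariant. For each subset $I \subseteq \{1, \ldots, n\}$, define $\lambda_I \in \Irr(N)$ as the tensor product with $\chi$ on the factors indexed by $I$ and the trivial character elsewhere. Because $\chi$ is $\Aut(S)$-invariant, the full automorphism group $\Aut(N) = \Aut(S) \wr \Sy_n$ acts on $\{\lambda_I\}$ only by permuting indices, so the conjugation action of $G$ preserves $|I|$; in particular, each $\Omega_k := \{\lambda_I : |I|=k\}$ is non-empty and $G$-invariant.

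Next, for each $k$ I would pick a $G$-orbit representative $\lambda_{I_k} \in \Omega_k$ and let $T_k := \Stab_G(\lambda_{I_k})$. The stabilizer of $\lambda_{I_k}$ in $\Aut(N)$ is $W_{I_k} := \Aut(S)^n \rtimes (\Sy_{I_k} \times \Sy_{I_k^c})$, and the tensor character $\wt{\chi}^{\otimes k}$ placed on the $I_k$-factors (trivial elsewhere) extends canonically, via the standard wreath-product construction, to a rational-valued irreducible character $\widehat{\lambda}_{I_k}$ of $W_{I_k}$. Pulling back along the conjugation homomorphism $\phi \colon G \to \Aut(N)$, whose image of $T_k$ sits inside $W_{I_k}$, will then produce a character $\wt{\lambda}_{I_k}$ of $T_k$; identifying $\phi|_N$ with the natural embedding $N \cong \mathrm{Inn}(N) \hookrightarrow \Aut(S)^n$ yields $\wt{\lambda}_{I_k}|_N = \lambda_{I_k}$, which is irreducible, forcing $\wt{\lambda}_{I_k}$ itself to be irreducible. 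By the Clifford correspondence, $\theta_k := \Ind_{T_k}^G \wt{\lambda}_{I_k}$ is then an irreducible character of $G$, and it inherits rationality from $\wt{\lambda}_{I_k}$ since induction preserves rational character values.

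Finally, the characters $\theta_0, \ldots, \theta_n$ are pairwise distinct because each $\theta_k|_N$ has all its irreducible constituents in $\Omega_k$ (by Clifford's theorem) and the $\Omega_k$ are pairwise disjoint. This will give $n+1 \geq n$ rational-valued irreducible characters of $G$. The main technical point to verify is the rationality of the canonical wreath-product extension $\widehat{\lambda}_{I_k}$; this should follow from the standard character formula for $\Aut(S) \wr \Sy_m$, which expresses the value at an element $(s_1, \ldots, s_m; \sigma)$ as a product over the cycles of $\sigma$ of values of $\wt{\chi}$ at certain products of the $s_i$, each of which is rational because $\wt{\chi}$ is.
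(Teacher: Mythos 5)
Your proposal is correct and follows essentially the same route as the paper: fix a non-principal character of $S$ with a rational-valued extension to $\Aut(S)$ (Lemma \ref{ExtendLemma}), form the tensor characters of $N$ involving $k$ copies of it, identify their inertia/stabilizer groups as products of wreath products, extend rationally via the wreath-product (tensor-induced) construction, and apply the Clifford correspondence, distinguishing the resulting induced characters by the number of factors appearing in their restrictions to $N$. The only cosmetic differences are that you pull back along the conjugation map $G \to \Aut(N)$ instead of first factoring out $\bC_G(N)$, and you include the case $k=0$ to get $n+1$ characters rather than $n$.
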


\begin{proof} Modding out $\bC_G(N)$ if necessary, we may assume that $\bC_G(N) = 1$ so
that $N \unlhd G \leq \Aut(N)$. By Lemma \ref{ExtendLemma}, there exists $\theta
\in \Irr(S)$ that is extendible to a rational-valued character, say
$\lambda$, of $\Aut(S)$. For each $1 \leq j \leq n$, set
\[
\psi_j:=\theta\otimes\cdots\otimes\theta\otimes1_{S_{j+1}}\otimes\cdots\otimes
1_{S_n}\in \Irr(N).
\]
Since $\Aut(N)$ acts transitively on the direct factors $S_i$'s of
$N$, the $\Aut(N)$-orbit of $\psi_j$ consists of characters of the
form $\alpha_1\otimes\alpha_2\otimes\cdots\otimes\alpha_n$ where
$\alpha_i\in\{1_{S_i},\theta\}$ for every $1 \leq i \leq n$ and the
number of times that $\theta$ appears in the tensor product is
precisely equal to $j$. This means that the size of the
$\Aut(N)$-orbit containing $\psi_j$ is $n!/j!(n-j)!$. On the other
hand, we see that $\psi_j$ is invariant under
\[
(\Aut(S) \wr \Sy_j) \times (\Aut(S) \wr \Sy_{n-j}),\] and \[ |\Aut(N) :
(\Aut(S) \wr \Sy_j ) \times (\Aut(S) \wr \Sy_{n-j})| = n!/j!(n-j)!.\] We
therefore deduce that $\Aut(S) \wr \Sy_j \times\Aut(S) \wr
\Sy_{n-j}$ is the inertia subgroup of $\psi_j$ in $\Aut(N)$.

Recall that $\theta$ extends to the rational-valued character
$\lambda\in\Irr(\Aut(S))$. Let $V$ be a $\mathbb{C} \Aut(S)$-module
affording $\lambda$. Then $\Aut(S)^j$ acts naturally on $V^{\otimes
j}$, with the character $\lambda\otimes\cdots\otimes\lambda$, and
$\Sy_j$ permutes the $j$ tensor factors of $V^{\otimes j}$. So
$V^{\otimes j}$ becomes a tensor-induced module for $\Aut(S^j) =
\Aut(S) \wr \Sy_j$. Let $\mu$ be the character afforded by this
module. Then, as $\lambda$ is rational-valued, the formula for the
tensor-induced character (see \cite{GI} for instance) implies that
$\mu$ is also rational-valued. We have seen that $\theta^j$ extends
to the rational-valued character $\mu\in\Irr(\Aut(S^j))$. It follows
that $\psi_j$ extends to a rational-valued character of
$I_{\Aut(N)}(\psi_j)$. In particular, $\psi_j$ extends to a
rational-valued character, say $\nu_j$, of $I_G(\psi_j) = G \cap
I_{\Aut(N)}(\psi_j)$. The Clifford correspondence now produces $n$
different rational-valued irreducible characters, namely $\nu_j^G$,
for $1 \leq j \leq n$, of $G$, and the proposition is proved.
\end{proof}

We are now ready to prove Theorem \ref{theorem1}.

\begin{proof}[Proof of Theorem \ref{theorem1}]
Since $k_\RR(G/\Sol(G)) \leq k_\RR(G)$ and $\Sol(G/\Sol(G))$ is
trivial, we may assume with no loss that $\Sol(G)$ is trivial. The
generalized Fitting subgroup of $G$, denoted by $\bF^*(G)$, is then
the direct product of the minimal normal subgroups of $G$, each of
which is a product of copies of a nonabelian simple group. Therefore
$\bC_G(\bF^*(G)) = 1$ and $G \leq \Aut(\bF^*(G))$.

Let $S$ be a simple direct factor of $\bF^*(G)$ and assume that the
number of times that $S$ appears in $\bF^*(G)$ is $n$. By
Proposition \ref{BoundProp}, we know that $n$ is bounded by $k$. It remains to
prove that $|S|$ is bounded in terms of $k$. Notice that if $|S|$ is
bounded in terms of $k$, then the number of choices for $S$
appearing in $\bF^*(G)$ is bounded, and therefore $\bF^*(G)$ is
bounded, which in turn implies that $|G|$ is bounded in terms of
$k$.

Let $N := S_1 \times S_2 \times\cdots \times S_n$ where each $S_i$
is isomorphic to $S$.

We have
$\bC_{\bN_G(S_1)/\bC_G(S_1)}(N\bC_G(S_1)/\bC_G(S_1)) = 1$, and hence
\[S_1 \cong N\bC_G(S_1)/\bC_G(S_1)\unlhd \bN_G(S_1)/\bC_G(S_1) \leq \Aut(S_1).
\] Assume, to the
contrary, that $|S|=|S_1|$ can be arbitrarily large while $k$ is
fixed. Using Theorem \ref{thm:mainsimple}, we then can choose $S_1$ so that
$\bN_G(S_1)/\bC_G(S_1)$ has at least $k^2 + 1$ real-valued
irreducible characters whose kernels do not contain $S_1$. Let
$\lambda$ be one of these characters.

Let $\theta$ be an irreducible constituent of
$\lambda\hspace{-3pt}\downarrow_{S_1}$, and set $\psi:=\theta\otimes
1_{S_2}\otimes\cdots\otimes 1_{S_n}$. Since $S_1\nsubseteq
\Ker(\lambda)$, we see that $\theta$ is nontrivial, and hence the
inertia subgroup $I_G(\psi)$ is contained in $\bN_G(S_1)$. The
Clifford correspondence then implies that, as $\lambda$ (considered
as a character of $\bN_G(S_1)$) lies over $\psi$, $\lambda^G$ is an
irreducible character of $G$. Moreover, $\lambda^G$ is real-valued
since $\lambda$ is.

We have shown that, for each $\lambda$ among $k^2 +1$ real-valued
irreducible characters of $\bN_G(S_1)$ whose kernels do not contain
$S_1$, there corresponds the real-valued irreducible character
$\lambda^G$ of $G$. On the other hand, as \[|G : \bN_G(S_1)| \leq
|\Aut(\bF^*(G)) : \bN_{\Aut(\bF^*(G))}(S_1)| = n \leq k,\] each
real-valued irreducible character of $G$ lies above at most $k$
irreducible characters of $\bN_G(S_1)$. We therefore deduce that $G$
has at least $k + 1$ real-valued irreducible characters, and this is
contradiction. Thus we conclude that $|S|$ is bounded in terms of
$k$, as desired.
\end{proof}


\end{document}